\newtheorem{Theorem}{Theorem}[section]
\newtheorem{Cor}[Theorem]{Corollary}
\newtheorem{Lemma}[Theorem]{Lemma}
\newtheorem{Proposition}[Theorem]{Proposition}
\theoremstyle{definition}
\newtheorem{Definition}[Theorem]{Definition}
\theoremstyle{remark}
\newtheorem{rem}[Theorem]{Remark}
\numberwithin{equation}{section}
\newcommand{\R}{\mathbb R}
\newcommand{\N}{\mathbb N}
\newcommand{\K}{\mathbb{K}}
\newcommand{\p}{\mathcal{P}}
\newcommand{\A}{\mathcal{A}}
\newcommand{\rel}{\mathcal{R}}
\newcommand{\mbc}{{\mathbb C}}
\newcommand{\BibTeX}{B\kern-0.1emi\kern-0.017emb\kern-0.15em\TeX}
\newcommand{\XYpic}{$\mathrm{X\kern-0.3em\raisebox{-0.18em}{Y}}$-$\mathrm{pic}\,$}
\newcommand{\cl}{C \kern -0.1em \ell}  
\newcommand{\ed}{\end{document}}
\title{Diffeological Generalized Formal Series: An Overview}
\author{Jean-Pierre Magnot}
\address{Univ. Angers, CNRS, LAREMA, SFR MATHSTIC, F-49000 Angers, France ;\\ 
Lepage Research Institute, 17 novembra 1, 081 16 Presov, Slovakia;
	\\ and \\  Lyc\'ee Jeanne d'Arc,  Avenue de Grande Bretagne, 63000 Clermont-Ferrand, France}
\email{magnot@math.cnrs.fr}
\thanks{The author thanks the organizers of the XLII Workshop on Geometric Methods in Physics for their kind invitation and support. He also thanks Anahita Eslami-Rad and Enrique G. Reyes for deep exchanges on generalizations of series during his collaborations on integrable systems, Ambar Sengupta and Sergio Albeverio for stimulating questions and exchanges on stochastic cosurfaces, Hong Van Le for her long and helpful informal exchanges on tensor series and Amandine V\'eber for short but enlightening exchanges on branching processes in morphogenesis.
Finally, the author acknowledges the France 2030 framework programme Centre Henri Lebesgue ANR-11-LABX-0020-01 for 
creating an attractive mathematical environment. }
\begin{document}

\begin{abstract}
We present an overview of some recent developments in the theory of generalized formal series, grounded in diffeological geometric framework. These constructions aim to offer new tools for understanding infinite-dimensional phenomena in mathematical physics, particularly in contexts where standard atlases and charts are insufficient. Applications to tensor structures and categorical gradings are discussed, and emerging applications are mentioned.
\end{abstract}

\subjclass{18F40, 13F05, 58A05}

\keywords{ generalized formal series, diffeologies, infinite dimensional geometry, non-manifolds differential structures.} 

\maketitle
\section{Introduction}

Formal series are ubiquitous in mathematics and physics, notably in the study of asymptotic expansions, deformation quantization, and integrable systems. While classical formal series are defined over standard algebraic structures, many modern problems—especially those involving infinite-dimensional objects—require more flexible geometric frameworks. 

This article explores generalized formal series built upon diffeological and Frölicher spaces, allowing for novel algebraic and geometric constructions beyond the classical setting.

\section{Diffeological preliminaries} \label{s:diffeology}
	This section provides the necessary background on diffeologymostly following 
	\cite{GMW2024}, but keeping \cite{Igdiff} as our main reference for diffeologies remains. A 
	complementary non-exhaustive bibliography on these subjects is 
	\cite{BKW2025,Wa}. {{} The main idea of diffeologies is to choose as a basement of definition of smooth intrinsic  objects a set of smooth mappings (with well-chosen compatibility conditions) instead of an atlas, considering manifolds as a restricted class of examples. We choose diffeologies because they possess 
	nice properties such as cartesian closedness, thereby carrying the necessary fundamental properties for e.g. 
	calculus of variations, and also because they are very easy to use in a differential geometric way of 
	thinking. 
	
	\begin{Definition}[Diffeology] \label{d:diffeology}
		Let $X$ be a set.  A \textbf{parametrisation} of $X$ is a
		map 
		$p \colon U \to X$, in which $U$ is an open subset of Euclidean space (no fixed dimension).  A 
		\textbf{diffeology} $\p$ on $X$ is a set of parametrisations satisfying the following three conditions:
		\begin{enumerate}
			\item (Covering) For every $x\in X$ and every non-negative integer
			$n$, the constant function $p\colon \R^n\to\{x\}\subseteq X$ is in
			$\p$.
			\item (Locality) Let $p\colon U\to X$ be a parametrisation such that for
			every $u\in U$ there exists an open neighbourhood $V\subseteq U$ of $u$
			satisfying $p|_V\in\p$. Then $p\in\p$.
			\item (Smooth Compatibility) Let $p\colon U\to X$ be a parametrisation in $\p$.
		Then for every $n >0$, every open subset $V\subseteq\R^n$, and every
			smooth map $F\colon V\to U$, we have $p\circ F\in\p$.
		\end{enumerate}
		
		A \textbf{diffeological space} $(X,\p)$ is a set $X$ equipped with a diffeology $\p$.
		When the diffeology is understood, we drop the symbol $\p$.
		The parametrisations $p\in\p$ are called \textbf{plots}.
	\end{Definition}
	
	{{}
		\noindent\textbf{Notation.} We recall that $\N^* = \{n \in \N \, | \, n \neq 0\}$ and that 
		$\forall m \in \N^*, \N_m = \{1,...,m\} \subset \N.$}
	
	\begin{Definition}[Diffeologically Smooth Map]\label{d:diffeolmap}
		Let $(X,\p_X)$ and $(Y,\p_Y)$ be two diffeological
		spaces, and let $F \colon X \to Y$ be a map.  We say that $F$ is
		\textbf{diffeologically smooth} if for any plot $p \in \p_X$,
		$F \circ p \in \p_Y.$
	\end{Definition}
	
	\noindent
	Diffeological spaces with diffeologically smooth maps form a category which is complete, co-complete, 
	and in fact, a quasi-topos (see \cite{BH}).  In particular, in this category pull-backs, push-forwards, 
	(infinite) products, and (infinite) coproducts exist. Diffeological spaces also carry a natural topology called $D-$topology, which is obtained by push-forward topology along the plots.
	The recent review \cite{GMW2024} fully describes these constructions.

	\begin{Definition}
		Let $(X,\p)$ and $(X',\p')$
		be two diffeological spaces.  A map $f : X \rightarrow X'$ is  called  a subduction
		if  $\p' = f_*(\p).$ 
	\end{Definition}

	\begin{Definition}[Subset Diffeology]\label{d:diffeol subset}
		Let $(X,\p)$ be a diffeological space, and let $Y\subseteq X$.  Then $Y$ comes equipped with the 
\textbf{subset diffeology}, which is the set of all plots in $\p$ with image in $Y$.
	\end{Definition}

\noindent
	If $X$ is a smooth manifold modelled on a complete locally convex topological 
	vector space, we define its \textbf{nebulae diffeology} as
\begin{equation} \label{aste}
	\p_\infty(X) = \left\{ p \in C^\infty(O,X) \hbox{ (in the usual sense) }| O \hbox{ is open in } \R^d, d \in \N^* 
	\right\}.
\end{equation}

	\begin{rem} \label{comp}
		Diffeological, $c^\infty$ and G\^ateaux smoothness are the same notion
		if we restrict to a Fr\'echet context, see e.g. \cite{GMW2024,BKW2025}.In particular, for manifolds modeled on Banach, Fr\'echet, or even locally convex vector spaces, classical smoothness is equivalent to diffeological smoothness for the underlying nebulae diffeologies. 
	\end{rem}
	
	\begin{Proposition} \label{quotient} 
	Let $(X,\p)$ b a diffeological	space and $\rel$ an equivalence relation on $X$. Then, there is
		a natural diffeology on $X/\rel$ defined as the push-forward diffeology by the quotient projection
		$X\rightarrow X/\rel,$ denoted by $\p/\rel$.
	\end{Proposition}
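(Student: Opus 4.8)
The plan is to take the asserted diffeology on $X/\rel$ to be exactly the push-forward $\pi_*(\p)$ along the canonical projection $\pi \colon X \to X/\rel$, and then to verify directly that this collection of parametrisations satisfies the three axioms of Definition \ref{d:diffeology}. Since $\pi$ is surjective, I would work with the following explicit local description of $\pi_*(\p)$: a parametrisation $q \colon U \to X/\rel$ belongs to $\pi_*(\p)$ if and only if every $u \in U$ admits an open neighbourhood $V \subseteq U$ and a plot $p \in \p$, defined on $V$, such that $q|_V = \pi \circ p$. The whole argument then reduces to checking that this family is closed under the operations prescribed by the axioms, together with the observation that it is the finest diffeology rendering $\pi$ smooth.

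For the covering axiom, given $\bar x \in X/\rel$ and $n \in \N$, I would lift $\bar x$ to some $x \in X$ using surjectivity of $\pi$; the constant plot $\R^n \to \{x\}$ lies in $\p$ by the covering axiom for $\p$, and composing it with $\pi$ yields the constant parametrisation at $\bar x$, which is therefore in $\pi_*(\p)$. The locality axiom is essentially built into the local nature of the definition: if $q$ is locally in $\pi_*(\p)$, then each point already has a neighbourhood on which $q$ factors through a plot of $\p$, which is precisely the membership condition, so no further work is needed beyond unwinding the definitions.

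The main step, and the only one where the structure of $\p$ is genuinely used, is smooth compatibility. Here I would start from $q \in \pi_*(\p)$ and a smooth map $F \colon W \to U$ with $W \subseteq \R^n$ open, and show $q \circ F \in \pi_*(\p)$. Fixing $v \in W$ and setting $u = F(v)$, I can choose a neighbourhood $V$ of $u$ and a plot $p \colon V \to X$ with $q|_V = \pi \circ p$. On the open neighbourhood $F^{-1}(V)$ of $v$ the restriction of $F$ is a smooth map into $V$, so the smooth compatibility axiom for $\p$ gives $p \circ F \in \p$ on that neighbourhood; since $\pi \circ (p \circ F) = q \circ F$ there, the parametrisation $q \circ F$ factors locally through a plot of $\p$ at every point, hence lies in $\pi_*(\p)$. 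I expect this to be the only genuine obstacle, and the care required is purely bookkeeping about the domains of the lifting plots and about shrinking neighbourhoods so that $F$ maps into the domain on which the factorization holds.

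Finally I would record that $\pi_*(\p)$ is the finest diffeology on $X/\rel$ for which $\pi$ is diffeologically smooth, so that $\pi$ is in fact a subduction in the sense defined above; this justifies calling $\p/\rel := \pi_*(\p)$ the natural quotient diffeology and completes the proof.
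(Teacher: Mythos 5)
Your proof is correct: the paper states this proposition without proof, as a standard preliminary taken from the diffeology literature (Iglesias-Zemmour, \cite{Igdiff}), and your verification of the three axioms for $\pi_*(\p)$ — using the local factorization $q|_V = \pi \circ p$ together with surjectivity of $\pi$ — is exactly the standard argument that the paper implicitly relies on. The only point worth flagging is that your local characterization of the push-forward diffeology coincides with the general definition precisely because $\pi$ is surjective (otherwise one must also allow locally constant parametrisations), and you have implicitly used this; since the quotient projection is surjective, your argument is complete.
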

	
	Within these constructions, algebraic structures can be \emph{diffeological} if their algebraic laws are smooth within the underlying diffeologies. For example, 
	 $V$ is a diffeological vector space if it is an algebraic vector space $(V,+,.)$ over a scalar field $(\mathbb{K},+,.)$ such that \cite[Chapter 3]{Igdiff}: 
	\begin{itemize}
		\item $V$ and $\mathbb{K}$ are diffeological spaces; 
		\item the addition in $V$ is a smooth map $V\times V \rightarrow V$;
		\item the addition and the multiplication in $\mathbb{K}$ are smooth maps $\mathbb{K} \times \mathbb{K} \rightarrow \mathbb{K}$;
		\item $\mathbb{K}^*$ is open in $\mathbb{K}$ for the D-topology, and the inversion map $\mathbb{K}^* \rightarrow \mathbb{K}^*$ is smooth;  
  for $\mathbb{K}=\R$ or $\mathbb{C},$ the D-topology is the standard topology;
		\item scalar multiplication $\mathbb{K} \times V \rightarrow V$ is smooth;
  \item  $\R  \subset \mathbb{K}$ is a diffeological subfield.
	\end{itemize}

%

\section{Generalized formal series}

\subsection{Diffeological formal series over a diffeological field}

Let $k$ be a diffeological field in the sense of \cite{ERMR}, i.e., a field equipped with a diffeology compatible with the field operations. 
We define the set of formal power series $k[[x]]$ as the set of sequences $(a_n)_{n\in\mathbb{N}}$ with $a_n\in k$. This space inherits a natural diffeology via the identification:
\(
k[[X]] \simeq k^{\mathbb{N}},
\)
where $k^{\mathbb{N}}$ is endowed with the product diffeology. This choice ensures smooth evaluation of truncated series and continuity of algebraic operations. A typical example is $\mathbb{R}$ or $\mathbb{C}$ endowed with their standard diffeology, or more exotic constructions like the field of $\mbc-$Laurent series with the infinite product diffeology.

\begin{rem} In contrast to the classical setting, where only the algebraic structure is emphasized, the diffeological approach ensures compatibility with smooth structures. \end{rem}
\subsection{Completion of a diffeological ultrametric space}

Let $(V, d)$ be a vector space over a valued field $(k, |\cdot|)$, where $d(x,y) = |x - y|$ defines an ultrametric distance. Suppose $V$ is equipped with a diffeology compatible with the ultrametric topology.
In \cite{ERMR}, the authors define a diffeological completion $\widehat{V}$ of $V,$ without additional assumptions. This yields a complete diffeological vector space, with natural inclusion $V \hookrightarrow \widehat{V}$ smooth and dense. Moreover, the construction is functorial: any smooth linear map between such spaces extends uniquely to their completions.It has been useful in \cite{ERMR} in the study of the r-matrix decomposition of the Kadomtsev-Petviashvili hierarchy in a non-ultrametric, generalized setting. In this example, the ultrametric distance generalize the valuations of the $(\partial,t_1,t_2,\cdots)-$series.

\subsection{Tensor-valued series}

	\subsubsection{Diffeologies on  power series of algebraic tensor product algebras}

 In what follows, we assume that the diffeological dual $V'$ of $V$ separates $V,$ that is, \(\forall (v,w)\in V^2, (v \neq w) \Leftrightarrow (\exists \alpha \in V', \alpha(v) \neq \alpha(w)).\)   Let us denote by $B(V,W)$ the  \color{blue} \color{black} space of smooth  bilinear forms on $V$ and $W$,  which   is endowed  with the standard  functional  diffeology,  see \cite[\S 1.57, p. 34]{Igdiff}. When $V$ and $W$ are two diffeological spaces, we define, generalizing \cite{BS12}, the tensor product $V\otimes W$ as the (algebraic) linear span of the image of $V \times W$ in the diffeological dual $(B(V,W))'$ of $B(V,W)$ by the evaluation mapping 
$(v,w) \in V \times W \mapsto (b \in B(V,W) \mapsto b(v,w)).$ Within this construction, for $(\lambda,\mu,v_1,v_2,w_1,w_2)\in \mathbb{K}^2\times V^2 \times W^2$, $\forall b \in B(V,W),$ 
$b(\lambda v_1 + \mu v_2 , w_1)-\lambda b(v_1,w_1)-\mu b(v_2,w_1)=0$ { and }  $b(v_1,\lambda w_1 + \mu w_2)-\lambda b(v_1,w_1)-\mu b(v_1,w_2)=0.$ 

If $V$ and $W$ are diffeological vector spaces, we shall equip $V \otimes W$  with the diffeology compatible  with  vector space  with  the push-forward diffeology     there  is no  push forward map $V \times W \to   V \otimes W$ with respect to the bilinear map $\otimes \, : \,  (v,w)\in V\times W \mapsto V \otimes W.$ 
\begin{Definition} Within this definition, a plot $p$ on $V \otimes W$ is a map from an open subset $D(p)$ of an Euclidean space to $V \otimes W$ such that, $\forall x \in D(p), \exists U$ { open neighborhood of } $x$ { in } $O, \exists ((v_i,w_i))_{i \in \N_n} \in (C^\infty(U,V)\times C^\infty(U,W))^n,$  $p|_U = \sum_{k=1}^n v_k \otimes w_k
.$
\end{Definition} \label{remark1.1}

\begin{Lemma} \label{lemma1.2}
    The addition $V \otimes W \times V \otimes W \rightarrow V \otimes W$ is smooth.
\end{Lemma}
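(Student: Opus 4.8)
The plan is to verify the definition of diffeological smoothness directly: I must show that for every plot $q$ of the product $V \otimes W \times V \otimes W$, the composite $+ \circ q$ is again a plot of $V \otimes W$, where $+$ denotes the addition map. By the universal property of the product diffeology, the plots of $V \otimes W \times V \otimes W$ are exactly the pairs $q = (p_1,p_2)$ of plots $p_1,p_2$ of $V \otimes W$ with common domain $D$. Hence it suffices to fix two such plots and prove that the pointwise sum $p_1 + p_2 \colon D \to V \otimes W$ satisfies the local condition of Definition \ref{remark1.1}. I would first note that the addition on $V \otimes W$ is the restriction of the addition inherited from the ambient diffeological vector space $(B(V,W))'$, and that $V \otimes W$, being the linear span of the image of $V \times W$, is closed under this addition; so $p_1 + p_2$ is indeed well-defined with values in $V \otimes W$.

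The core step is local and exploits that the plot condition permits the integer $n$ to vary with the base point. Fix $x \in D$. Applying the plot condition separately to $p_1$ and to $p_2$ yields open neighborhoods $U_1,U_2 \subseteq D$ of $x$, integers $n_1,n_2 \in \N^*$, and smooth families $(v_k,w_k)_{k \in \N_{n_1}}$, $(v'_j,w'_j)_{j \in \N_{n_2}}$ with $v_k,v'_j$ valued in $V$ and $w_k,w'_j$ valued in $W$, such that $p_1 = \sum_{k=1}^{n_1} v_k \otimes w_k$ on $U_1$ and $p_2 = \sum_{j=1}^{n_2} v'_j \otimes w'_j$ on $U_2$. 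Putting $U = U_1 \cap U_2$, an open neighborhood of $x$, restriction and concatenation of the two finite families give
\[
(p_1 + p_2)|_U = \sum_{k=1}^{n_1} v_k \otimes w_k + \sum_{j=1}^{n_2} v'_j \otimes w'_j ,
\]
a finite sum of $n_1 + n_2$ elementary tensors of smooth $V$- and $W$-valued maps on $U$. This is precisely the local form required of a plot, so $p_1 + p_2$ meets the plot condition at the arbitrary point $x$, hence on all of $D$.

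I do not anticipate a serious obstacle, since the finite-sum description of plots is manifestly stable under addition by simple concatenation; the only points warranting care are bookkeeping ones. First, one must confirm that the sum computed pointwise in $(B(V,W))'$ coincides with the formal sum of the two local tensor representations, which follows from bilinearity of the evaluation pairing $(v,w) \mapsto (b \mapsto b(v,w))$ and the linearity identities recorded just after the construction of $V \otimes W$. Second, one should observe that the restriction of a smooth $V$- or $W$-valued map to the smaller open set $U$ remains smooth, which is immediate from the locality and smooth-compatibility axioms for the diffeologies on $V$ and $W$. With these verifications in place the conclusion follows.
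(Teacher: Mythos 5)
Your proof is correct and takes essentially the same route as the paper's: identify a plot of the product as a pair of plots of $V \otimes W$ on a common domain, write each locally as a finite sum of elementary tensors of smooth $V$- and $W$-valued maps, and conclude by concatenating the two sums. If anything, you are slightly more careful than the paper, which suppresses the localization step (passing to the intersection $U_1 \cap U_2$ of the two neighborhoods) that you make explicit.
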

  \begin{proof}
      Any plot in $V \otimes W \times V \otimes W$ reads locally as $p\times q,$ where $p$ and $q$ are plots in $V \otimes W.$ {More precisely, $p \times q$ is a (local) plot on $V \otimes W \times V \otimes W$ such that the plots $p$ and $q$ have a common domain $U$ in an Euclidean vector space. Therefore, $p$ is}
       such that $\exists ((v_i,w_i))_{i \in \N_n} \in (C^\infty(U,V)\times C^\infty(U,W))^n,$ $p = \sum_{k=1}^n v_k \otimes w_k.$ The same way, let $q$ be such that $\exists ((v'_i,w'_i))_{i \in \N_n} \in (C^\infty(U,V)\times C^\infty(U,W))^m,$ $q = \sum_{l=1}^m v'_l \otimes w'_l .$
      Therefore, the image of { $p \times q$ }reads as $$\left(\sum_{k=1}^n v_k \otimes w_k\right) + \left(\sum_{l=1}^m v'_l \otimes w'_l\right), $$ which is sufficient to show that it defines a smooth plot on $V \otimes W$.
  \end{proof}  

\begin{Lemma} \label{Lemma1.3}
The scalar multiplication  $\mathbb{K} \times  V \otimes W \rightarrow V \otimes W$ is smooth.
\end{Lemma}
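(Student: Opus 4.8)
The plan is to mimic the structure of the proof of Lemma~\ref{lemma1.2}, exploiting that smoothness in the diffeological sense is a purely local condition that can be checked plot by plot. First I would take an arbitrary plot of the product space $\mathbb{K}\times(V\otimes W)$. By the definition of the product diffeology, such a plot is locally of the form $(\lambda,p)$ where $\lambda\in C^\infty(U,\mathbb{K})$ is a smooth scalar-valued map on some open subset $U$ of a Euclidean space, and $p$ is a plot of $V\otimes W$ sharing the same domain $U$. By the very definition of plots on $V\otimes W$, after shrinking $U$ around any given point I may write $p=\sum_{k=1}^{n}v_k\otimes w_k$ with $v_k\in C^\infty(U,V)$ and $w_k\in C^\infty(U,W)$.

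The heart of the argument is then to push the scalar through the tensor. The composite of the plot with the scalar multiplication map is the parametrisation
\[
x\longmapsto \lambda(x)\cdot\sum_{k=1}^{n}v_k(x)\otimes w_k(x)=\sum_{k=1}^{n}\bigl(\lambda(x)\,v_k(x)\bigr)\otimes w_k(x).
\]
Here I would invoke the defining bilinearity relations recorded just before Definition~\ref{remark1.1}, namely $b(\lambda v,w)=\lambda\,b(v,w)$ for every $b\in B(V,W)$, which legitimises moving the scalar inside the first tensor factor at the level of the underlying elements of $(B(V,W))'$. Setting $\tilde v_k:=\lambda\cdot v_k$, I need $\tilde v_k\in C^\infty(U,V)$: this follows because scalar multiplication $\mathbb{K}\times V\to V$ is smooth (part of the hypothesis that $V$ is a diffeological vector space), so the map $x\mapsto\lambda(x)\,v_k(x)$ is a plot of $V$. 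Consequently the composite is exhibited in the required local form $\sum_{k=1}^{n}\tilde v_k\otimes w_k$ with $\tilde v_k\in C^\infty(U,V)$ and $w_k\in C^\infty(U,W)$, which is precisely a plot of $V\otimes W$ by Definition~\ref{remark1.1}.

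Since the point of $U$ was arbitrary and the property was checked on a neighbourhood, the locality axiom of Definition~\ref{d:diffeology} guarantees that the global composite is a plot; this establishes that scalar multiplication sends plots to plots, i.e. is diffeologically smooth. I expect the only genuinely delicate point to be the bookkeeping ensuring that absorbing $\lambda$ into $v_k$ respects the quotient by the bilinearity relations, i.e. that the expression $\sum_k(\lambda v_k)\otimes w_k$ really is the image of the tensor $\lambda\cdot\sum_k v_k\otimes w_k$ inside $(B(V,W))'$ rather than merely a formal rewriting; this is handled by the first displayed bilinearity identity preceding Definition~\ref{remark1.1}. The remaining verifications are the routine smoothness of $\lambda\cdot v_k$ and the compatibility with shrinking domains, both immediate from the diffeological vector space axioms for $V$.
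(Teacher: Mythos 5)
Your proof is correct and follows essentially the same route as the paper's: write a plot on $\mathbb{K}\times(V\otimes W)$ locally as a pair $(\lambda,p)$ with $p=\sum_k v_k\otimes w_k$, absorb the scalar into the first tensor factor via bilinearity, and conclude that the composite is again a plot. You actually spell out two points the paper leaves implicit — that $\lambda\cdot v_k\in C^\infty(U,V)$ by the diffeological vector space axioms for $V$, and that the rewriting $\lambda\cdot\sum_k v_k\otimes w_k=\sum_k(\lambda v_k)\otimes w_k$ is legitimate inside $(B(V,W))'$ — which is a welcome tightening rather than a deviation.
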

\begin{proof}
{A plot on $\mathbb{K} \times V \otimes W$ reads locally as $\lambda \times p,$ where $\lambda$ is a plot on $\mathbb{K},$ $p$ is a plot on $V \otimes W,$ and $\lambda$ and $p$ have the same domain $U$ in and Euclidean space. The plot $p$ can also be assumed to read as }
      $p = \sum_{k=1}^n v_k \otimes w_k,$ where $((v_i,w_i))_{i \in \N_n} \in (C^\infty(U,V)\times C^\infty(U,W))^n.$
    Then \(
        \lambda \otimes p  =  \lambda \otimes \left(\sum_{k=1}^n v_k \otimes w_k\right)  =  \sum_{k=1}^n (\lambda v_k) \otimes w_k.
    \)
     Therefore, {$\lambda \otimes p \in C^\infty(U, V\otimes W).$} This is sufficient to prove that scalar multiplication is smooth. 
\end{proof}
\noindent
As a consequence of Lemma \ref{lemma1.2} and of Lemma \ref{Lemma1.3}, we have: 
\begin{Proposition} \label{proposition1.4}
    Let $V$ and $W$ be two diffeological vector spaces over a diffeological field $\mathbb{K}.$ Then $V \otimes W$ is a diffeological vector space over $\mathbb{K}.$
\end{Proposition}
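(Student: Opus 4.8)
The plan is to unwind the definition of a diffeological vector space recalled in Section~\ref{s:diffeology} and to verify each of its clauses for $V \otimes W$, equipped with the diffeology of Definition~\ref{remark1.1}. Most of these clauses concern only the scalar field and are inherited for free: since $\mathbb{K}$ is assumed to be a diffeological field, the addition and multiplication $\mathbb{K} \times \mathbb{K} \to \mathbb{K}$ are smooth, $\mathbb{K}^*$ is open in the D-topology with smooth inversion, and $\R \subset \mathbb{K}$ is a diffeological subfield. Hence nothing new must be checked on the side of $\mathbb{K}$, and the work reduces to the two algebraic laws intrinsic to $V \otimes W$.

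First I would confirm that the parametrisations prescribed in Definition~\ref{remark1.1} genuinely form a diffeology on $V \otimes W$, that is, that they satisfy the covering, locality, and smooth-compatibility axioms of Definition~\ref{d:diffeology}. Covering holds because a constant map with value $\sum_k v_k \otimes w_k$ is realised by taking the factors $v_k$ and $w_k$ to be constant plots into $V$ and $W$; locality is immediate from the purely local form of the defining condition; and smooth compatibility follows by precomposing each smooth factor $v_k \in C^\infty(U,V)$ and $w_k \in C^\infty(U,W)$ with a smooth map, which again yields smooth factors. Together with the fact that, by construction, $V \otimes W$ is the algebraic linear span of the image of $V \times W$, this shows that $V \otimes W$ is an algebraic vector space carrying a well-defined diffeology.

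It then remains to check that the two laws are smooth for this diffeology: smoothness of the addition $V \otimes W \times V \otimes W \to V \otimes W$ is exactly the content of Lemma~\ref{lemma1.2}, and smoothness of the scalar multiplication $\mathbb{K} \times V \otimes W \to V \otimes W$ is exactly the content of Lemma~\ref{Lemma1.3}. Collecting these with the field conditions above, every clause in the definition is met, which proves the statement. I do not expect a genuine obstacle here, since the substantive analytic work has already been isolated in Lemmas~\ref{lemma1.2} and~\ref{Lemma1.3}; the only point requiring care is the bookkeeping that the locally finite expressions $\sum_k v_k \otimes w_k$ are compatible with the bilinearity relations, so that the induced operations descend to well-defined maps on $V \otimes W$ rather than merely on the free module generated by $V \times W$.
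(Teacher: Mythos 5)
Your proof is correct and follows essentially the same route as the paper, which derives the Proposition directly as a consequence of Lemma~\ref{lemma1.2} (smoothness of addition) and Lemma~\ref{Lemma1.3} (smoothness of scalar multiplication). Your additional verifications --- that the parametrisations of Definition~\ref{remark1.1} satisfy the covering, locality and smooth-compatibility axioms, and that the field-side conditions are inherited from $\mathbb{K}$ --- are sound diligence that the paper leaves implicit, but they do not change the substance of the argument.
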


\begin{Lemma} \label{lemma1.5}
    The canonical Cartesian product map
$ V^{\times n} \times V^{\times m} \rightarrow V^{\times(n+m)}$ reduces  to a  smooth map $\otimes:$ 
$ V^{\otimes n} \times V^{\otimes m} \rightarrow V^{\otimes(n+m)}$
\end{Lemma}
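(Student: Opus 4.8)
The plan is to mimic the structure of the proofs of Lemma \ref{lemma1.2} and Lemma \ref{Lemma1.3}, reducing the smoothness question to the explicit local normal form of plots on tensor powers. Before that, I would dispose of well-definedness: the canonical identification $V^{\times n}\times V^{\times m}\cong V^{\times(n+m)}$ descends to a map $V^{\otimes n}\times V^{\otimes m}\to V^{\otimes(n+m)}$ because the concatenation of elementary tensors $(v_1\otimes\cdots\otimes v_n,\, w_1\otimes\cdots\otimes w_m)\mapsto v_1\otimes\cdots\otimes v_n\otimes w_1\otimes\cdots\otimes w_m$ is multilinear and hence factors through the algebraic tensor product by the universal property; this is exactly the algebraic content already encoded in the bilinearity relations displayed after the definition of $V\otimes W$.

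The technical heart is a decomposition lemma for plots on $V^{\otimes n}$: every plot $p$ on $V^{\otimes n}$ is, in a neighbourhood $U$ of each point of its domain, of the form $p|_U=\sum_{k=1}^N v_1^k\otimes\cdots\otimes v_n^k$ with each $v_i^k\in C^\infty(U,V)$. For $n=2$ this is precisely the Definition preceding Lemma \ref{lemma1.2}. For the inductive step, writing $V^{\otimes n}=V\otimes V^{\otimes(n-1)}$, a plot reads locally as $\sum_k v_k\otimes p_k$ with $v_k\in C^\infty(U,V)$ and $p_k\in C^\infty(U,V^{\otimes(n-1)})$; applying the inductive hypothesis to each of the finitely many $p_k$ and shrinking $U$ to a common neighbourhood yields the claimed form.

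Third, I would run the argument of Lemmas \ref{lemma1.2}--\ref{Lemma1.3}: an arbitrary plot on $V^{\otimes n}\times V^{\otimes m}$ reads locally as $p\times q$ with common domain $U$; applying the decomposition lemma to $p$ and $q$ (after a further common shrinking of $U$) gives $p=\sum_{k=1}^N v_1^k\otimes\cdots\otimes v_n^k$ and $q=\sum_{l=1}^M w_1^l\otimes\cdots\otimes w_m^l$. The image under $\otimes$ is then
$$\sum_{k=1}^N\sum_{l=1}^M v_1^k\otimes\cdots\otimes v_n^k\otimes w_1^l\otimes\cdots\otimes w_m^l,$$
a finite sum of tensor products of smooth $V$-valued functions on $U$, which is exactly the local form of a plot on $V^{\otimes(n+m)}$. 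Hence the reduced map $\otimes$ is smooth, and together with Proposition \ref{proposition1.4} this gives the statement.

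I expect the main obstacle to be the decomposition lemma, specifically the bookkeeping needed to pass from the two-factor local form to the $n$-factor one while retaining a single common domain $U$: each inner plot $p_k$ carries its own decomposition neighbourhood, so one must intersect the finitely many of them and invoke the locality axiom of the diffeology to glue the pieces back into one plot. Once this local normal form for plots on $V^{\otimes n}$ is secured, the smoothness of $\otimes$ is a direct computation entirely parallel to the two preceding lemmas.
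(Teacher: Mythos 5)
Your proposal is correct and takes essentially the same route as the paper: the paper's proof likewise writes a plot on $V^{\otimes n}\times V^{\otimes m}$ locally as $p^{(n)}\times p^{(m)}$ on a common domain $U$, invokes the local normal form of plots as finite sums of elementary tensors of maps in $C^\infty(U,V)$, and observes that the product is the double sum $\sum_{k}\sum_{l}(v^1_k\otimes\cdots\otimes v^n_k)\otimes(w^1_l\otimes\cdots\otimes w^m_l)$, hence again a plot. The two points you flag as needing care --- well-definedness via multilinearity, and the inductive decomposition lemma extending the two-factor normal form to $V^{\otimes n}$ --- are exactly what the paper passes over silently by simply citing its definition of plots on $V\otimes W$, so your version is a more complete write-up of the same argument rather than a different one.
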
 

\begin{proof}
     {A plot on $ V^{\times n} \times V^{\times m}$ locally reads as $p^{(n)} \times p^{(m)}$, where }$p^{(n)}$ is a plot on $V^{\otimes n}$ and  $p^{(m)}$ is a plot on $V^{\otimes m},$ both defined on the same domain $U$ of an Euclidean space.   Following Remark \ref{remark1.1},  $$\exists ((v_j^i)_{(i,j) \in \N_n \times \N_a},(w_j^i)_{(i,j) \in \N_m \times \N_b} \in (C^\infty(U,V))^{na}\times C^\infty(U,M))^{mb},  $$ 
    $$p^{(n)} = \sum_{k=1}^a \otimes_{p=1}^n v^p_k
    \hbox{ and } p^{(m)} = \sum_{l=1}^b \otimes_{q=1}^n w^q_l.$$
    Then $p^{(n)} \otimes p^{(m)}= \sum_{k=1}^a \sum_{l=1}^b (\otimes_{p=1}^n v^p_k)\otimes (\otimes_{q=1}^n w^q_l)$ is a smooth plot on $V^{n+m}$ and this check is sufficient to prove that $\otimes$ is smooth.
\end{proof}
\noindent
For $n \in \N^*,$ the n-th tensor product of $V$ is defined algebraically by induction as the space $V^{\otimes n+1} = V \otimes V^{\otimes n},$
and the $n$-tensor product maps $(v_i)_{i \in \N_n} \in V^n \mapsto \bigotimes_{i \in \N_n} v_i \in V^{\otimes n}$ are $n$-multilinear.  We equip the $n$-th tensor product with the push-forward diffeology too. 

\begin{Lemma} \label{lemma1.5}
    Let $V$ be a diffeological vector spaces over a diffeological field $\mathbb{K}.$ Then $\forall n \in \N^*,$ $V^{\otimes n}$ is a diffeological vector space over $\mathbb{K}$.
\end{Lemma}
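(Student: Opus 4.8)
The plan is to proceed by induction on $n$, using Proposition \ref{proposition1.4} as the engine of the inductive step. For the base case $n=1$ there is nothing to do: by the inductive definition $V^{\otimes 1} = V$, which is a diffeological vector space over $\mathbb{K}$ by hypothesis. For the inductive step, suppose $V^{\otimes n}$ is a diffeological vector space over $\mathbb{K}$. The inductive definition gives $V^{\otimes n+1} = V \otimes V^{\otimes n}$, and since both factors are diffeological vector spaces over the same field $\mathbb{K}$—the first by hypothesis, the second by the inductive assumption—Proposition \ref{proposition1.4} applies verbatim with $W = V^{\otimes n}$, yielding that $V \otimes V^{\otimes n} = V^{\otimes n+1}$ is a diffeological vector space over $\mathbb{K}$. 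In particular the smoothness of addition and of scalar multiplication on $V^{\otimes n+1}$ is inherited directly from Lemmas \ref{lemma1.2} and \ref{Lemma1.3} applied to this last tensor product, so no new plot-level computation is needed at this stage.

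The one point that genuinely requires care, and which I expect to be the main obstacle, is the standing separation hypothesis. The very construction of $V \otimes W$ as the linear span of the image of $V \times W$ in $(B(V,W))'$ presupposes that the diffeological dual separates points of each factor; this was imposed on $V$ but must be re-established for $V^{\otimes n}$ before Proposition \ref{proposition1.4} can legitimately be invoked at the $(n+1)$-th stage. I would discharge this by propagating separation through the tensor product: given $\alpha \in V'$ and $\beta \in (V^{\otimes n})'$, the assignment $v \otimes t \mapsto \alpha(v)\,\beta(t)$ extends to a well-defined linear form on $V \otimes V^{\otimes n}$, whose smoothness is checked on local plots of the form $\sum_{k} v_k \otimes t_k$ exactly as in the proofs of Lemmas \ref{lemma1.2} and \ref{Lemma1.3}. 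Since an elementary tensor $v \otimes t$ is nonzero only if both $v$ and $t$ are nonzero, choosing $\alpha, \beta$ separating the respective factors produces a smooth linear form that does not vanish on it, so $(V^{\otimes n})'$ separates $V^{\otimes n}$ and the hypothesis is preserved along the induction.

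Granting this propagation, Proposition \ref{proposition1.4} may be applied at every stage and the induction closes. I would finally record, for completeness, that the $n$-multilinear map $(v_i)_{i \in \N_n} \mapsto \bigotimes_{i \in \N_n} v_i$ is smooth for the resulting push-forward diffeology: this is immediate, since the push-forward diffeology on $V^{\otimes n}$ is by construction the finest diffeology making this map smooth, and its compatibility with the iterated tensor maps follows from the associativity of the algebraic tensor product together with the smoothness of $\otimes$ already established for the Cartesian-to-tensor reduction. The substantive content of the lemma therefore lies entirely in verifying that the separation hypothesis survives each application of the tensor-product construction; everything else reduces to a bookkeeping of the earlier results.
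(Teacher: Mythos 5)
Your proof is correct and takes essentially the same route as the paper, whose entire proof is the one-line induction applying Proposition \ref{proposition1.4} to $V^{\otimes n+1} = V^{\otimes n} \otimes V$. Your additional discussion of propagating the separation hypothesis is care the paper does not take; just note that your elementary-tensor argument by itself does not separate general sums of tensors, whereas separation follows more directly from the construction: any nonzero $z \in V \otimes V^{\otimes n} \subset \left(B(V,V^{\otimes n})\right)'$ satisfies $z(b) \neq 0$ for some smooth bilinear form $b$, and the evaluation functional at $b$ is a smooth linear form detecting $z$.
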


\begin{proof}
    By induction, by applying Proposition \ref{proposition1.4} to $V^{\otimes n+1}= V^{\otimes n} \otimes V.$
\end{proof}

 Lemma \ref{Lemma1.3} implies

\begin{Theorem} \label{tensor diffeological group}
	The power series  tensor algebra $(T((V)),+,\otimes)$ defined by 
	$$T^\otimes((V)) = \mathbb{K} \oplus \prod_{k \in \N^*} V^{\otimes k}$$
	is a diffeological $\mathbb{K}$-algebra for the infinite product diffeology, whose  group of the units $T^\otimes((V))^{\times}$  is   $T_0((V)) - \{0\}$,  which  is a diffeological group for the subset diffeology.  \end{Theorem}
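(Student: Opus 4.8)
The plan is to verify the three assertions in turn: that $T^\otimes((V))$ is a diffeological $\mathbb{K}$-algebra, that its group of units consists exactly of the series with invertible scalar term, and that this group is a diffeological group.

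For the algebra structure, I would exploit the defining feature of the infinite product diffeology: a map $p\colon U \to T^\otimes((V))$ is a plot if and only if each component $p_k \colon U \to V^{\otimes k}$ (and $p_0\colon U\to\mathbb{K}$) is smooth. Addition and scalar multiplication are performed degree by degree, so their smoothness is immediate from the diffeological vector space structure of each $V^{\otimes k}$ established in Proposition \ref{proposition1.4} and Lemma \ref{lemma1.5}, combined with the product diffeology. For the multiplication $\otimes$, the degree-$n$ component of $a\otimes b$ is the finite sum $\sum_{i=0}^{n} a_i \otimes b_{n-i}$, where the terms with $i=0$ or $i=n$ are scalar multiplications (Lemma \ref{Lemma1.3}) and the remaining ones are the concatenation maps $V^{\otimes i}\times V^{\otimes(n-i)} \to V^{\otimes n}$ shown to be smooth in Lemma \ref{lemma1.5}. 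Since each component is thus a smooth function of the pair $(a,b)$ and only finitely many summands occur in a fixed degree, the product diffeology yields smoothness of $\otimes$; associativity and bilinearity are purely algebraic.

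Next I would characterize the units. If $a$ is invertible with inverse $b$, comparing degree-$0$ components in $a\otimes b = 1$ gives $a_0 b_0 = 1$, so $a_0 \in \mathbb{K}^*$. Conversely, if $a_0\in\mathbb{K}^*$, write $a = a_0(1 - n)$ with $n$ lying in the augmentation ideal $\mathfrak{m}$ of series with vanishing scalar term. Since any $\otimes$-product of $k$ elements of $\mathfrak m$ has vanishing components in all degrees $< k$, the geometric series $\sum_{k\ge 0} n^{\otimes k}$ has, in each fixed degree, only finitely many nonzero summands, hence defines an element of $T^\otimes((V))$; it inverts $1-n$, and $a^{-1} = a_0^{-1}\sum_{k\ge0} n^{\otimes k}$. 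This identifies the unit group with the set of series having nonzero scalar term, i.e. $T_0((V)) - \{0\}$.

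The \emph{main obstacle} is the smoothness of inversion on this unit group with its subset diffeology. I would proceed from the recursion obtained by solving $a\otimes b = 1$ degree by degree: $b_0 = a_0^{-1}$ and, for $n \ge 1$, $b_n = -a_0^{-1}\sum_{i=1}^{n} a_i \otimes b_{n-i}$. Given a plot $p\colon U \to T^\otimes((V))^{\times}$, its components $a_i = p_i$ are smooth and $a_0$ takes values in $\mathbb{K}^*$; since $\mathbb{K}^*$ is D-open and inversion on $\mathbb{K}^*$ is smooth, $u\mapsto a_0(u)^{-1}$ is smooth, giving $b_0$. Arguing by induction on $n$, each $b_n$ is built from the already-smooth $b_0,\dots,b_{n-1}$ and $a_1,\dots,a_n$ using only the smooth operations of Lemmas \ref{lemma1.2}, \ref{Lemma1.3} and \ref{lemma1.5}, hence is smooth. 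By the product diffeology, $u\mapsto p(u)^{-1}$ is a plot, so inversion is smooth. Finally, since the subset diffeology makes a map into $T^\otimes((V))^{\times}$ smooth precisely when it is smooth into the ambient algebra, the restriction of the already-smooth multiplication is smooth, and together with smoothness of inversion this shows the unit group is a diffeological group. The point requiring care throughout is the interplay of the infinite product structure with the recursion: it succeeds only because each fixed degree receives contributions from finitely many lower degrees, which is exactly what keeps every component a finite, and hence smooth, expression.
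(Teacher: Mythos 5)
Your proposal is correct and takes essentially the same approach as the paper: the diffeological algebra structure is deduced from the three preparatory lemmas (smoothness of addition, of scalar multiplication, and of the tensor product maps), and the units are identified as the series with nonzero scalar term and inverted via the geometric (Neumann) series of an element of positive valuation, each degree of which involves only finitely many smooth operations. Your degree-by-degree recursion for the inverse is just the unrolled form of that series, and it makes explicit the plot-wise verification that the paper compresses into the single sentence ``all these operations are smooth, therefore inversion is smooth.''
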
 

\begin{rem}
    In general, when $A$ is a diffeological algebra, the natural diffeology of $A^*$ can be different from the subset diffeology of $A,$ see e.g. \cite{GMW2024} for discussions on the diffeological algebra $L(V)$ of smooth linear endomorphisms of a diffeological vector space $V.$ Indeed, in general, we are not sure that the inversion map $a \mapsto a^{-1}$  is smooth for the dubset diffeology. 
\end{rem}

The space $T^\otimes((V))$ is naturally graded by the order $n$ of the tensor product $V^{\otimes n}$.  We note by $[a]_n$ the $V^{\otimes n}$-component of $ a \in T^\otimes((V)).$ Therefore, we can define a degree
and a valuation 
.

\begin{proof}[Proof of Theorem \ref{tensor diffeological group}]
   As a direct consequence of Lemma \ref{lemma1.2}, Lemma \ref{Lemma1.3} and of Lemma \ref{lemma1.5}, we have that $T^\otimes((V))$ is a diffeological algebra, that is, an algebra and a diffeological space for which addition, multiplication and scalar multiplication are smooth. 
In order to prove that inversion is smooth, we check the classical algebraic formulas for inversion. Let $T^{\otimes,>0}((V))$ be the two-sided ideal
   $T^{\otimes,>0}((V))= \prod_{k \in \N^*} V^{\otimes k} = \left\{ a \in T^\otimes((V)) \, | \, val^{\otimes}(a)>0\right\}.$
   
   Let $\lambda + t \in \mathbb{K} \oplus T^{\otimes,>0}((V)).$ Then $\lambda + t$ is invertible if and only if $\lambda \neq 0$ and 
   $ (\lambda + t)^{-1} = \lambda^{-1} \left(1 + \sum_{k=1}^{\infty} (-1)^k \left(\lambda t\right)^k \right).$
   All these operations are smooth therefore inversion is smooth. 
\end{proof}

\subsubsection{Exponential, logarithm and formal series}
We now settle $\K = \R$ or $\mathbb{C}.$

\begin{Definition} Let $\lambda + u \in \K + T^{\otimes,>0}((V)) = T^\otimes((V))$ and we define
$ \exp\left(\lambda + u\right) = \exp (\lambda) \left( \sum_{k=0}^{+\infty} \frac{1}{k!}u^k\right)$
and, for \( \lambda \in \R_+^*, \,  \log\left(\lambda + u\right) = \log (\lambda) + \left( \sum_{k=1}^{+\infty} \frac{(-1)^{k+1}}{k}\left(\frac{u}{\lambda}\right)^k\right).\)
\end{Definition}
\noindent
In the sequel we assume that $\mathbb{K}=\R .$
\begin{Theorem}
    The group $ T^\otimes((V))^\times$  has two connected components:
    $$T^{\otimes,\pm}((V)) = \left\{ \lambda + u \in \R + T^{\otimes,>0}((V))  \, | \, \lambda \in \R_\pm^* \right\}.$$
    Moreover, $\exp:  T_0((V)) \rightarrow T_0^+((V)) $ is an isomorphism of diffeological group with smooth inverse $\log$.
\end{Theorem}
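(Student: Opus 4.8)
The plan is to split the statement into two parts: the determination of the connected components of the unit group, and the diffeomorphism property of $\exp$. Throughout I read $T_0((V))$ as the full algebra $T^\otimes((V))$ on which $\exp$ is defined and $T_0^+((V))$ as the positive component $T^{\otimes,+}((V))$, so the second assertion becomes: $\exp$ is a smooth bijection $T^\otimes((V)) \to T^{\otimes,+}((V))$ with smooth inverse $\log$. For the components, I would first exploit the scalar-part projection $\pi_0 \colon \lambda + u \mapsto \lambda$, which in the infinite product diffeology is exactly the projection onto the degree-$0$ factor, hence smooth and therefore $D$-continuous. By Theorem \ref{tensor diffeological group} the units are $\{\lambda + u \mid \lambda \in \R^*\}$, and $\pi_0$ maps them onto $\R^* = \R_+^* \sqcup \R_-^*$; since $\R_+^*$ and $\R_-^*$ are clopen in $\R^*$, their preimages $T^{\otimes,+}((V))$ and $T^{\otimes,-}((V))$ form a clopen partition of the unit group.

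For connectedness of each piece I would exhibit explicit paths rather than invoke $\exp$. Given $\lambda + u \in T^{\otimes,+}((V))$ with $\lambda > 0$, the path $s \mapsto \lambda + (1-s)u$ keeps scalar part $\lambda > 0$, hence stays in the unit group and joins $\lambda + u$ to the scalar $\lambda$; the path $s \mapsto (1-s)\lambda + s$ keeps scalar part a positive convex combination and joins $\lambda$ to $1$. Both are smooth by the smoothness of addition and scalar multiplication (Theorem \ref{tensor diffeological group}), hence $D$-continuous, so $T^{\otimes,+}((V))$ is connected. The smooth involution $x \mapsto -x$ maps it onto $T^{\otimes,-}((V))$, which is therefore connected as well; together with the clopen separation above this shows the unit group has exactly the two connected components claimed.

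The core technical step is the smoothness of $\exp$ and $\log$ in the product diffeology. Given a plot $p = (p_n)_{n \in \N}$ into $T^\otimes((V))$ — equivalently, smooth maps $p_n$ into each $V^{\otimes n}$ — I would note that because every $u \in T^{\otimes,>0}((V))$ has valuation $\geq 1$, the product $u^k$ has valuation $\geq k$, so the degree-$n$ component of $\exp(\lambda + u)$ is the finite expression $e^{\lambda}\sum_{k=0}^{n}\frac{1}{k!}[u^k]_n$. Each $[u^k]_n$ is a finite sum of iterated tensor products of the components of $u$, hence smooth in the parameter by the smoothness of tensor multiplication, addition and scalar multiplication (Theorem \ref{tensor diffeological group}), while $x \mapsto e^{p_0(x)}$ is smooth by classical calculus on $\R$. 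Thus every component of $\exp \circ p$ is smooth, so $\exp \circ p$ is a plot; the same degree-wise truncation applies to $\log$. That $\exp$ and $\log$ are mutually inverse then follows degree-by-degree: each composite $\log \circ \exp$ and $\exp \circ \log$ only involves powers of a single element, which commute, so the computation takes place inside the commutative subalgebra generated by that element and reduces to the classical formal identities.

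The point I expect to be delicate is the phrase ``isomorphism of diffeological group''. Since $T^\otimes((V))$ is non-commutative, $\exp(a+b) = \exp(a)\exp(b)$ fails in general, so $\exp$ is not a homomorphism from the additive to the multiplicative group on the nose, and the identity $\log \circ \exp = \mathrm{id}$ does not by itself repair this. I would therefore either (i) interpret the claim as the diffeomorphism statement established above, or (ii) if a genuine group isomorphism is intended, restrict to a commutative setting — for instance the closed subalgebra generated by a single element, or the scalar case — where the Baker--Campbell--Hausdorff correction vanishes and $\exp(a+b) = \exp(a)\exp(b)$ holds, making $\exp$ a genuine diffeological group isomorphism with smooth inverse $\log$. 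This is the only place where the algebraic structure of $V$ genuinely enters, and it is the step to state with care.
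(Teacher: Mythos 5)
Your proposal is correct, and on the two main points it is essentially the paper's own proof. The paper also uses the smooth degree-zero projection $[\cdot]_0$ to exhibit $T^{\otimes,\pm}((V))$ as clopen preimages of $\R_\pm^*$ in the $D$-topology, and it obtains connectedness by noting that each piece is a convex cone --- your explicit segments $s \mapsto \lambda + (1-s)u$ and $s \mapsto (1-s)\lambda + s$ are exactly that convexity written out as smooth paths. For smoothness of $\exp$ and $\log$ the paper says only that they ``are defined by smooth operations''; your degree-wise argument (the degree-$n$ component of $\exp(\lambda+u)$ is the finite sum $e^{\lambda}\sum_{k\leq n}\frac{1}{k!}[u^k]_n$, each term smooth by smoothness of $\otimes$, addition and scalar multiplication, and a map into an infinite product is a plot precisely when each component is) is the rigorous content behind that one-line remark, so there is no real discrepancy there; your reading of the notation $T_0((V))$ as the full algebra is also the only sensible one.

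Where you genuinely depart from the paper is your final paragraph, and your suspicion is well founded: the paper's proof waves the algebraic part off as ``elementary and well-known'', but $\exp$ is \emph{not} a group homomorphism from $(T^\otimes((V)),+)$ to $(T^{\otimes,+}((V)),\otimes)$ when the algebra is noncommutative --- take $v_1,v_2\in V$ with $v_1\otimes v_2\neq v_2\otimes v_1$ and compare the degree-two components of $\exp(v_1+v_2)$ and $\exp(v_1)\exp(v_2)$. The paper itself tacitly concedes this: its very next theorem, smoothness of $BCH(u,v)=\log(\exp(u)\exp(v))$, only has content because $\exp$ fails to intertwine the two group laws, and the parallel theorem for the symmetrized algebra is careful to say ``abelian diffeological groups''. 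So the clause ``isomorphism of diffeological group'' must be read as your option (i), a diffeomorphism (smooth bijection with smooth inverse); that is what your proof establishes, and it is the most the statement can correctly claim.
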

\begin{proof}
    The algebraic part is elementary and well-known.
    Let us consider the projection $[.]_0 , $ which is smooth. We have that  $T_0^+((V))=[.]^{-1}_0(\R_+^*)$ and $T_0^-((V))=[.]^{-1}_0(\R_-^*)$ are both open and closed in the $D-$topology. They are also convex cones, thus they are connected.
The maps  $\exp$ and $\log$ are both defined by smooth operations,hence they are smooth.
\end{proof}
\noindent
We can now define the Baker-Campbell-Hausdorff series.
Next theorem is straightforward:
\begin{Theorem}
     $BCH: (u,v) \mapsto \log(\exp(u)\exp(v)).$ is smooth.
\end{Theorem}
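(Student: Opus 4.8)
The plan is to reduce the smoothness of $BCH$ to the smoothness results already established for $\exp$ and $\log$, since $BCH$ is literally defined as the composite $(u,v)\mapsto \log(\exp(u)\exp(v))$. The strategy is therefore to exhibit this composite as a finite chain of maps each of which is known to be diffeologically smooth, and then invoke the fact that a composition of diffeologically smooth maps is smooth (Definition \ref{d:diffeolmap}).

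First I would fix the relevant domain. The maps $\exp$ and $\log$ in the previous theorem are set up as inverse isomorphisms between $T_0((V))$ and $T_0^+((V))$, so to make the composite well defined I take $(u,v)$ ranging over a subset of $T^\otimes((V))\times T^\otimes((V))$ on which $\exp(u)\exp(v)$ lands in the domain of $\log$, namely in $T_0^+((V))$ (equivalently, the connected component of the units with positive scalar part). I would note that $\exp(u)$ and $\exp(v)$ are units with scalar part in $\R_+^*$, and that the product of two such units again has positive scalar part, so the range condition is automatic and no further restriction is needed.

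Next I would decompose the map. Writing it as
\begin{equation}
(u,v)\;\longmapsto\;(\exp(u),\exp(v))\;\longmapsto\;\exp(u)\otimes\exp(v)\;\longmapsto\;\log\bigl(\exp(u)\otimes\exp(v)\bigr),
\end{equation}
I would check each arrow in turn. The first arrow is smooth because $\exp$ is smooth by the preceding theorem and a product of smooth maps acting coordinatewise on a product diffeology is smooth. The middle arrow is the multiplication $\otimes$ of the diffeological algebra $T^\otimes((V))$, which is smooth by Theorem \ref{tensor diffeological group}. The last arrow is smooth because $\log$ is smooth by the preceding theorem. Composing, the whole map is smooth.

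The main obstacle, such as it is, lies not in the algebra but in bookkeeping the domains and the grading so that each intermediate composite genuinely factors through the stated smooth maps: one must confirm that $\exp(u)\otimes\exp(v)$ indeed lies in $T_0^+((V))$ so that $\log$ applies, and that the product diffeology on $T^\otimes((V))\times T^\otimes((V))$ is the one for which both $\exp$-evaluations and the subsequent multiplication are simultaneously smooth. Since all three constituent facts—smoothness of $\exp$, smoothness of $\otimes$, and smoothness of $\log$—are already in hand, once the domain is pinned down the conclusion follows immediately by composition, which is exactly why the theorem is labeled straightforward.
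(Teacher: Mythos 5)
Your proof is correct and is exactly the argument the paper intends: the paper omits the proof entirely, calling the theorem ``straightforward'' precisely because $BCH$ is a composite of the already-established smooth maps $\exp$, the algebra multiplication of Theorem \ref{tensor diffeological group}, and $\log$. Your additional bookkeeping---checking that $\exp(u)\exp(v)$ has positive scalar part so that $\log$ applies---is a worthwhile detail that the paper leaves implicit.
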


\subsubsection{Symmetrized tensors}
 Let us first fix $n \in \N^*.$ 
 { Let us note by $S^n(V)$ the space of symmetric diffeological n-linear forms on $V.$ When $V$ and $W$ are two diffeological spaces, we define, generalizing \cite{BS12}, the symmetrized tensor product $V^{\odot n}$ as the image of $V^{\times n}$ in the diffeological dual $(S^n(V))'$ of $S^n(V)$ by the evaluation mapping
$(v_1,...v_n) \in V^{\times n} \mapsto (b \in S^n(V) \mapsto b(v_1,...,v_n)).$
Let us fix $n \in \N^*.$ We notice that $S^n(V)$ is a (diffeological) linear subspace of the space of diffeological $n-$linear maps $B^n(V).$ The symmetrization map 
\begin{equation} \label{sym1} \hbox{sym}_n:b \in B^n(V) \mapsto \left( (v_1,...,v_n) \mapsto \frac{1}{n!} \sum_{\sigma \in \mathfrak{S}_n} b(v_{\sigma(1)},...,v_{\sigma(n)} )\right)\end{equation} is a smooth projection  $\hbox{sym}_n: B^n(V) \rightarrow S^n(V) $
while the canonical inclusion $S^n(V) \hookrightarrow B^n(V)$ the smooth left inverse of $\hbox{sym}_n.$ The same way, we define:
\begin{Definition}
The symmetrization mapping in $V^{\otimes n}$ is the the restriction to $V^{\otimes n}$ of dual map $B^n(V)' \rightarrow S^n(V)'$ of the canonical inclusion $S^n(V) \hookrightarrow B^n(V).$ We also denote this mapping by $\hbox{sym}_n$ since it carries no ambiguity.
\end{Definition}
In order to get a more precise idea of $\hbox{sym}_n$ acting on $V^{\otimes n},$ we apply in a standard way equation \ref{sym1} to get \begin{equation} \label{sym2} \hbox{sym}_n(v_1 \otimes... \otimes v_n) = \frac{1}{n!} \sum_{\sigma \in \mathfrak{S}_n} v_{\sigma(1)} \otimes... \otimes v_{\sigma(n)}:= v_1 \odot ... \odot v_n.  \end{equation}
We define a smooth symmetrized tensor product
$ \odot : V^{\odot n} \times V^{\odot m} \rightarrow V^{\odot (n+m)}$
by checking that the following commutative diagram is made of smooth maps:

\vskip 12pt
\begin{center}
\begin{tikzcd}
V^{\times n} \times V^{\times m}\arrow[r, "\otimes_n \times \otimes_m"] \arrow[d]
& V^{\otimes n} \times V^{\otimes m} \arrow[r, "\hbox{sym}_n \times \hbox{sym}_m"] \arrow[d, "\otimes" ] & V^{\odot n} \times V^{\odot m} \arrow[d, "\odot"]\\
V^{\times (n + m)} \arrow[r,  "\otimes_{n+m}" ]
& V^{\otimes (n + m)} \arrow[r,  "\hbox{sym}_n" ] & V^{\odot (n + m)}
\end{tikzcd}
\end{center}
\vskip 12pt

\begin{Theorem} \label{sym-tensor diffeological group}
	The symmetric power series  tensor algebra $(T^\odot((V)),+,\odot)$ defined by 
	$T^\odot((V)) = \mathbb{K} \oplus \prod_{k \in \N^*} V^{\odot k}$
	is an abelian diffeological $\mathbb{K}$-algebra,    whose  group of the units $T^\odot((V))^{\times} = T^\odot((V) \cap T^\otimes((V))^\times$  is  $T^\odot((V)) - \{0\}$,   which  is an abelian diffeological group.  \end{Theorem}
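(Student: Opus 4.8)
The plan is to mirror the proof of Theorem \ref{tensor diffeological group}, replacing the associative product $\otimes$ by the symmetrized product $\odot$ and reusing the smoothness already recorded in the commutative diagram that precedes the statement. First I would verify that $(T^\odot((V)),+,\odot)$ is a diffeological $\mathbb{K}$-algebra. The additive and scalar structure is handled exactly as in Lemma \ref{lemma1.2} and Lemma \ref{Lemma1.3}: each graded piece $V^{\odot k}$ is itself a diffeological vector space, being the smooth image of $V^{\otimes k}$ under the idempotent $\hbox{sym}_k$ (so the argument of Proposition \ref{proposition1.4} applies verbatim), and the product diffeology on $\mathbb{K} \oplus \prod_{k \in \N^*} V^{\odot k}$ renders componentwise addition and scalar multiplication smooth.

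For the product I would use that the graded maps $\odot : V^{\odot n} \times V^{\odot m} \to V^{\odot(n+m)}$ are smooth, which is precisely the content of the displayed commutative diagram (they are composites of the smooth symmetrization $\hbox{sym}_{n+m}$ with the smooth tensor maps). The product on all of $T^\odot((V))$ is then reconstructed degree by degree via the finite sums $[a\odot b]_n = \sum_{i+j=n}[a]_i \odot [b]_j$; each of these is a finite combination of smooth maps into a factor of the product diffeology, so $\odot$ is smooth. Commutativity follows immediately from equation \ref{sym2}: the generator $v_1 \odot \cdots \odot v_n$ is invariant under every $\sigma \in \mathfrak{S}_n$, whence $a \odot b = b \odot a$ on generators and, by bilinearity, on the whole algebra. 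This already shows that $(T^\odot((V)),+,\odot)$ is an abelian diffeological $\mathbb{K}$-algebra.

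It then remains to describe the units and to check that inversion is smooth. Writing a general element as $\lambda + u$ with $\lambda \in \mathbb{K}$ and $u \in \prod_{k \in \N^*} V^{\odot k}$, I would argue as in Theorem \ref{tensor diffeological group} that $\lambda + u$ is invertible if and only if $\lambda \neq 0$, the inverse being the geometric series $\lambda^{-1}\sum_{k \geq 0}(-1)^k \lambda^{-k} u^{\odot k}$. Since $u$ has positive valuation and $\odot$ adds valuations, the $V^{\odot n}$-component of this series is a finite sum for each $n$; hence the series converges in the product diffeology and is assembled from smooth operations, so inversion is smooth and the units form a diffeological group, abelian because the algebra is commutative.

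The step I expect to require the most care is the identification $T^\odot((V))^\times = T^\odot((V)) \cap T^\otimes((V))^\times$, because $\odot$ is not the restriction of $\otimes$ to symmetric tensors but its symmetrization, so the inverses computed in the two algebras genuinely differ. The resolution is that $\hbox{sym}$ is a surjective algebra morphism from $(T^\otimes((V)),\otimes)$ onto $(T^\odot((V)),\odot)$, and in both algebras invertibility of $\lambda + u$ is cut out by the single condition $[\lambda+u]_0 = \lambda \neq 0$; the two unit sets therefore coincide, and the $\odot$-inverse automatically lies in $T^\odot((V))$ since this space is $\odot$-closed.
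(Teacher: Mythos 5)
Your proof is correct and follows essentially the same route as the paper: the paper writes no separate proof for this theorem, relying---exactly as you do---on the smoothness of the graded products $\odot$ coming from the commutative diagram together with ``the same arguments as for $\otimes$,'' i.e.\ the componentwise smoothness and geometric-series inversion used for Theorem~\ref{tensor diffeological group}. Your extra verifications (commutativity via equation~(\ref{sym2}), and the observation that both $T^\odot((V))^\times$ and $T^\odot((V))\cap T^\otimes((V))^\times$ are cut out by the single condition $[\,\cdot\,]_0\neq 0$, so the unit sets coincide even though $\otimes$- and $\odot$-inverses differ) simply make explicit details the paper leaves to the reader.
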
 

\begin{rem} \label{remark-degree-sym}
The space $T_0^\odot((V))$ is graded by the order $n$ of the tensor product $V^{\odot n}$ as a diffeological vector supbspace of $V^{\otimes n}$.  We therefore restrict the notations of the degree and of the valuation to $T^\odot((V)). $
\end{rem}

\noindent
We now assume that $\K = \R$ or $\mathbb{C}.$ As for the (non symmetrized) tensor product $\otimes.$ By the same constructions:
\( \exp^\odot\left(\lambda + u\right) = \exp (\lambda) \left( \sum_{k=0}^{+\infty} \frac{1}{k!}u^{\odot k}\right),\)
and if $\K=\R,$
 $\lambda \in \R_+^*,$
\( \log^\odot\left(\lambda + u\right) = \log (\lambda) + \left( \sum_{k=1}^{+\infty} \frac{(-1)^{k+1}}{k}\left(\frac{u}{\lambda}\right)^{\odot k}\right).\)
we get, with the same arguments as for $\otimes,$
\begin{Theorem}
     $ T^\odot((V))^\times$  has two connected components:
    $T^{\odot,\pm}((V)).$
    Moreover, $\exp^\odot:  (T^\odot((V)),+) \rightarrow (T^{\odot,+}((V)),\odot) $ is an isomorphism of abelian diffeological groups with smooth inverse $\log^\odot$.
\end{Theorem}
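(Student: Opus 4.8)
The plan is to transcribe, almost verbatim, the argument used for the non-symmetrized algebra $T^\otimes((V))$, since every ingredient has a symmetric counterpart that is smooth by the constructions preceding Theorem \ref{sym-tensor diffeological group}. Algebraically, the symmetric power series algebra is commutative, so the formal identities $\exp^\odot(a+b) = \exp^\odot(a) \odot \exp^\odot(b)$, $\log^\odot \circ \exp^\odot = \mathrm{id}$ and $\exp^\odot \circ \log^\odot = \mathrm{id}$ hold without any ordering subtlety; this furnishes the group isomorphism at the level of underlying sets and laws. It then remains only to identify the connected components and to verify smoothness of $\exp^\odot$ and $\log^\odot$ for the product diffeology.

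First I would treat the connected components. The degree-zero projection $[\cdot]_0 : T^\odot((V)) \to \R$ is one of the factor projections of the infinite product, hence smooth. By Theorem \ref{sym-tensor diffeological group} the group of units consists exactly of the elements whose degree-zero component is invertible in $\K$, so $T^\odot((V))^\times = [\cdot]_0^{-1}(\R^*)$. Writing $\R^* = \R_+^* \sqcup \R_-^*$ as a disjoint union of two clopen subsets and pulling back along the smooth, hence $D$-continuous, map $[\cdot]_0$ exhibits $T^{\odot,\pm}((V)) = [\cdot]_0^{-1}(\R_\pm^*)$ as two disjoint subsets that are open and closed in the $D$-topology. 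Each of them is a convex cone: if $\lambda + u$ and $\mu + w$ have degree-zero parts of the same sign, then so does every convex combination. A convex set is connected by straight smooth paths, so each $T^{\odot,\pm}((V))$ is connected, and there are exactly two components.

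Next I would verify smoothness. For the product diffeology on $T^\odot((V)) = \K \oplus \prod_{k \in \N^*} V^{\odot k}$, a map is smooth precisely when each of its graded components is smooth. Fix $n$ and a plot $\lambda + u$; since $u$ has valuation at least $1$, the term $u^{\odot k}$ has valuation at least $k$, so only the finitely many indices $k \le n$ contribute to $[\exp^\odot(\lambda+u)]_n$. This component is therefore a finite sum of terms obtained from the smooth operations $\odot$ and scalar multiplication (smooth by Theorem \ref{sym-tensor diffeological group}), multiplied by the smooth scalar factor $\exp(\lambda)$; hence it is smooth. The same finiteness applies to $\log^\odot$, whose degree-zero part $\log(\lambda)$ is smooth on $\R_+^*$ and whose higher components are again finite smooth expressions in the components of $u/\lambda$. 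Thus both maps are smooth, and $\exp^\odot$ is a diffeomorphism onto $T^{\odot,+}((V))$ with smooth inverse $\log^\odot$.

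The only genuine point requiring care---and the place where the ``same arguments as for $\otimes$'' must be checked rather than merely asserted---is the reduction of smoothness to the graded components together with the finiteness of each component. This is exactly what the valuation grading of Remark \ref{remark-degree-sym} guarantees, so no new analytic input (no estimates, no convergence beyond the termwise one) is needed; everything else is the formal, commutative transcription of the tensor argument.
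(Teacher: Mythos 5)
Your proof is correct and follows essentially the same route as the paper: the paper also identifies the components via the smooth projection $[\cdot]_0$, noting that $[\cdot]_0^{-1}(\R_\pm^*)$ are clopen convex cones, and disposes of smoothness of $\exp^\odot$ and $\log^\odot$ by observing they are built from smooth operations. Your added detail---reducing smoothness to graded components of the product diffeology and using the valuation to see that each component is a finite smooth expression---is exactly the justification the paper's terse phrase ``defined by smooth operations'' leaves implicit, so there is no divergence of method.
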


\subsubsection{The symmetrization bundle}

Let us concentrate on $sym: T^\otimes((V))^\times \rightarrow T^\odot((V))^\times$
{ and on its analog } $sym: T^\otimes((V)) \rightarrow T^\odot((V)).$

\begin{Proposition}
   $sym: T_0^\otimes((V))^\times \rightarrow T_0^\odot((V))^\times$
and $sym: T_0^\otimes((V)) \rightarrow T_0^\odot((V))$ are smooth morphisms of groups and of algebras, respectively. 
\end{Proposition}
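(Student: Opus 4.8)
The plan is to reduce everything to the graded components and then to invoke the product diffeology. First I would recall that, by construction, $sym=(sym_n)_{n\in\N}$ acts diagonally on the grading, with $sym_0=\mathrm{Id}_{\mathbb{K}}$ and $sym_n\colon V^{\otimes n}\to V^{\odot n}$ the smooth symmetrization maps built above from \eqref{sym1} and \eqref{sym2}. Since both $T_0^\otimes((V))$ and $T_0^\odot((V))$ carry the infinite product diffeology, a map into $T_0^\odot((V))$ is smooth if and only if each of its graded components is smooth. Hence, for any plot $p$ of $T_0^\otimes((V))$, the $k$-th component of $sym\circ p$ is $sym_k\circ(\pi_k\circ p)$, a composition of the smooth map $sym_k$ with a plot of $V^{\otimes k}$, therefore a plot of $V^{\odot k}$. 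This yields smoothness of $sym\colon T_0^\otimes((V))\to T_0^\odot((V))$ at once.

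Next I would check the algebra-morphism identity $sym(a\otimes b)=sym(a)\odot sym(b)$. It suffices to verify it on graded pieces, because each homogeneous component of $a\otimes b$ is the finite Cauchy sum $[a\otimes b]_n=\sum_{i+j=n}[a]_i\otimes[b]_j$ and every $sym_n$ is linear. For $\alpha\in V^{\otimes i}$ and $\beta\in V^{\otimes j}$, the required equality $sym_i(\alpha)\odot sym_j(\beta)=sym_{i+j}(\alpha\otimes\beta)$ is exactly the commutativity of the right-hand square of the diagram defining $\odot$, namely $\odot\circ(sym_i\times sym_j)=sym_{i+j}\circ\otimes$. Componentwise linearity of $sym$ together with this identity then shows that $sym$ is a morphism of $\mathbb{K}$-algebras; in particular $sym(1)=1$, so it is unital.

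For the group statement I would argue that a unital algebra morphism sends units to units and inverses to inverses: if $a\in T_0^\otimes((V))^\times$, then $sym(a)\odot sym(a^{-1})=sym(a\otimes a^{-1})=sym(1)=1$, so $sym(a)\in T_0^\odot((V))^\times$ and $sym(a^{-1})=sym(a)^{-1}$. Since by Theorem \ref{tensor diffeological group} and Theorem \ref{sym-tensor diffeological group} invertibility in either algebra is detected by the nonvanishing of the degree-$0$ component $[\cdot]_0$, which $sym$ preserves because $sym_0=\mathrm{Id}$, the map $sym$ indeed restricts to a group homomorphism $T_0^\otimes((V))^\times\to T_0^\odot((V))^\times$. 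Smoothness of this restriction follows from the already established smoothness of $sym$ on the ambient algebra together with the universal property of the subset diffeology carried by the units.

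The only point requiring care is the passage from the graded, finite-sum identity to the full power-series algebra: one must ensure that a degreewise verification is legitimate. This is precisely what the product diffeology grants, since it lets us test both smoothness and the multiplicative identity one component at a time, each component being a finite sum with no convergence issue. Beyond this bookkeeping the statement is essentially formal, the genuine content having been absorbed earlier into the smoothness of the $sym_n$ and the commuting diagram defining $\odot$.
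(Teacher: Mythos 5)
Your proof is correct. In fact the paper states this Proposition without any proof at all (it is an overview and leaves it to the reader), so there is no argument to compare against line by line; what you wrote is the natural proof, and it uses exactly the ingredients the paper has prepared: componentwise smoothness granted by the infinite product diffeology on $T^\otimes((V))$ and $T^\odot((V))$, the commutativity of the right-hand square of the diagram defining $\odot$, i.e.\ $\odot\circ(sym_i\times sym_j)=sym_{i+j}\circ\otimes$, applied to the finite Cauchy sums $[a\otimes b]_n=\sum_{i+j=n}[a]_i\otimes [b]_j$, and the fact that in both algebras invertibility is detected by the nonvanishing of the degree-zero component, so that the unital algebra morphism restricts to a group morphism on units, smooth for the subset diffeologies. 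One cosmetic point: the bottom-right arrow of the paper's diagram is labelled $\hbox{sym}_n$, an evident typo for $\hbox{sym}_{n+m}$, which you implicitly and correctly corrected when invoking the identity $sym_{i+j}(\alpha\otimes\beta)=sym_i(\alpha)\odot sym_j(\beta)$.
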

\noindent
Let $K = Ker (sym) \subset T_0^\otimes((V))^\times$ and let $\mathfrak{K} = Ker (sym) \subset T_0^\otimes((V)).$
\begin{Proposition}
    $T_0^\odot((V))^\times = T_0^\otimes((V))^\times / K$ and $T_0^\odot((V)) = T_0^\otimes((V)) / \mathfrak{K}.$
\end{Proposition}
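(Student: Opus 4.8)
The plan is to read both identities as the first isomorphism theorem performed inside the category of diffeological spaces. Treat the algebra case first. By the preceding Proposition, $sym : T_0^\otimes((V)) \to T_0^\odot((V))$ is a smooth morphism of algebras whose kernel is $\mathfrak{K}$, and it is surjective because on each homogeneous component $sym_n : V^{\otimes n} \to V^{\odot n}$ is onto its image $V^{\odot n}$; the infinite product diffeology reduces the global statement to these componentwise ones. The purely algebraic first isomorphism theorem then furnishes an algebra isomorphism $\overline{sym} : T_0^\otimes((V))/\mathfrak{K} \to T_0^\odot((V))$ satisfying $\overline{sym}\circ\pi = sym$, where $\pi$ denotes the quotient projection. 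What remains is to upgrade this algebraic isomorphism to an isomorphism of diffeological algebras, that is, to check that $\overline{sym}$ and $\overline{sym}^{-1}$ are both smooth.

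Smoothness of $\overline{sym}$ is immediate: endow the quotient with the push-forward diffeology of Proposition \ref{quotient}, and recall that a map out of a push-forward is smooth exactly when its precomposition with the projection is smooth. Since $\overline{sym}\circ\pi = sym$ is smooth, so is $\overline{sym}$.

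The substantive step is the smoothness of $\overline{sym}^{-1}$, for which I would produce a smooth section of $sym$. On each component the symmetrization $sym_n$ is the smooth idempotent projecting $V^{\otimes n}$ onto the symmetric tensors, and $V^{\odot n}$ is exactly this symmetric subspace with its subset diffeology (Remark \ref{remark-degree-sym}); hence the inclusion $\iota_n : V^{\odot n} \hookrightarrow V^{\otimes n}$ is smooth and satisfies $sym_n\circ\iota_n = \mathrm{Id}$. Assembling these, $\iota = \mathrm{Id}_\mathbb{K}\oplus\prod_n \iota_n$ is a smooth section of $sym$. I then claim $\overline{sym}^{-1} = \pi\circ\iota$. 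On one side $\overline{sym}\circ(\pi\circ\iota) = sym\circ\iota = \mathrm{Id}$; on the other, for any $a$ one has $sym\bigl(\iota(sym(a)) - a\bigr) = 0$, so $\iota(sym(a)) - a \in \mathfrak{K}$ and therefore $\pi(\iota(sym(a))) = \pi(a)$, giving $(\pi\circ\iota)\circ\overline{sym} = \mathrm{Id}$. As a composite of smooth maps $\pi\circ\iota$ is smooth, so $\overline{sym}$ is a diffeomorphism and hence an isomorphism of diffeological algebras.

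The group identity is proved by the identical scheme in multiplicative language: $sym : T_0^\otimes((V))^\times \to T_0^\odot((V))^\times$ is a surjective smooth group morphism with kernel $K$, the induced $\overline{sym}$ is smooth by the quotient universal property, and its smooth inverse is again $\pi\circ\iota$ (restricted to units, where $\iota$ preserves the nonzero scalar part and so lands in the units), now using $sym\bigl(\iota(sym(a))\cdot a^{-1}\bigr) = e$ to conclude $\iota(sym(a))\cdot a^{-1}\in K$. The only real obstacle is the construction of the smooth section $\iota$; once symmetrization is recognized as a smooth idempotent on each graded piece, everything else is formal. One should merely note that $\iota$ need not be a morphism — it serves only as a smooth set-theoretic section used to transport smoothness — while the homomorphism property of $\overline{sym}^{-1}$ is already guaranteed by the algebraic isomorphism.
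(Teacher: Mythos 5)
Your proof is correct. The paper in fact states this Proposition with no proof at all, so there is no written argument to compare against; but your key device --- the smooth inclusion $\iota$ of $T_0^\odot((V))$ into $T_0^\otimes((V))$, used as a section so that $\overline{sym}^{-1}=\pi\circ\iota$ is smooth --- is exactly the map the paper appeals to immediately after the Proposition, when it remarks that the inclusion is a global smooth section of the short exact sequence $1 \rightarrow K \rightarrow T_0^\otimes((V))^\times \rightarrow T_0^\odot((V))^\times \rightarrow 1$. Your write-up (algebraic first isomorphism theorem, smoothness of the induced map via the universal property of the push-forward diffeology of Proposition \ref{quotient}, smoothness of the inverse via the section built from the componentwise inclusions $V^{\odot n}\hookrightarrow V^{\otimes n}$) supplies precisely the details the paper leaves implicit, in the spirit the paper intends.
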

Therefore we get a short exact sequence of diffeological Lie groups
$$ 1 \rightarrow K \rightarrow T_0^\otimes((V))^\times \rightarrow T_0^\odot((V))^\times \rightarrow 1$$
with inclusion map which is a global section $T_0^\odot((V))^\times \rightarrow T_0^\otimes((V))^\times.$
Therefore, $T_0^\otimes((V))^\times$ is a central extension of $T_0^\odot((V))^\times.$ 

\subsection{Series indexed by $\mathbb{N}$-graded small categories}

We extend and extend the procedure used in \cite{Ma2013} for series indexed by $\N.$  
Let $(I,*)$ be a small category with neutral elements $e.$ By small category, we require that $*$ is associative, with neutral element(s) and that it is not necessarily total.
Let $A_i$ be a family of diffeological vector spaces indexed by $I.$ The family $\{\A_i; i \in I \}$ is equipped with a multiplication, associative and distributive with respect to addition in the vector spaces $\A_i,$ such that 
$$A_i . A_j \left\{ \begin{array}{ll} \subset A_{i*j} & \hbox{ if } i*j \hbox{ exists} \\
= 0 & \hbox{ otherwise. } \end{array} \right.$$and smooth.
Let $\mathcal{A}$ be the vector space of formal series of the type $ a = \sum_{i \in I} a_i $ with $ a_i \in {A}_i $
and such that, for each $k \in I $, there is a finite number of indexes $(i,j) \in I^2$ such that $i*j=k $ and $ai.a_j \neq 0_k.$ The $\N-$ grading on $I$ defines a degree and a valuation on $\A.$
Notice that, with such a definition, if ${e} \subset ord^{-1}(0)$ such that $e*e=e,$ $\A_e$ is an algebra.
From now, we assume $\A$ unital, and we note its unit element $1.$

\begin{Definition}
Let $I$ as above, such that, there is a $\N-$grading, that is,  a morphism of small categories $ord : I \rightarrow \N,$ such that the order of any unital element is $0.$  
Let $A_i$ be family of diffeological vector spaces indexed by $I.$
A diffeological vector subspace $\mathcal{A}_0 \subset \A = \left\{ \sum_{i \in I} a_i | a_i \in \mathcal{A}_i \right\}$ is called diffeological $I-$graded regular algebra if and only if it is equipped with a multiplication that extends $*$, associative and distributive with respect addition, and smooth.
\end{Definition}

\section{More examples} \label{pc}

\subsection{Examples of $h-$deformed pseudo-differential operators} \label{opd}
Let us describe a straightforward generalization of the groups described in \cite{Ma2013,MR2024,MR2025}. Let $E$ be a smooth vector bundle over a
compact manifold without boundary M. We denote by  $ Cl(M, E) $ (resp.  $ Cl^k (M, E)
$) the space of
 classical pseudo-differential operators (resp.
classical pseudo-differential operators of order k) acting on smooth
sections of $E$. We denote by $Cl^*(M,\mathbb{C}^n)$, resp.
 $Cl^{0,*}(M,\mathbb{C}^n),$ the groups of
the units of the algebras $Cl(M,\mathbb{C}^n),$ resp.
$Cl^{0}(M,\mathbb{C}^n)$.
Notice that $Cl^{0,*}(M,\mathbb{C}^n)$ is a CBH Lie group \cite{Glo}.
.
\begin{Definition}
Let $h$ be a formal parameter. 
We define the algebra of formal series 
$$Cl_h(M,E) = \left\{ \sum_{t \in \N^*} h^k a_k | \forall k \in \N^*, a_k \in Cl(M,E) \right\}.$$
\end{Definition}
This is obviously an algebra, graded by the order (the valuation) into the variable  $h.$ Thus, setting
$ \A_n = \left\{ h^n a_n | a_n \in Cl^n(M,E)\right\} ,$
we can set $\A = Cl_h(M,E).$ This is  a Fr\'echet vector space. Therefore, integrals of smooth paths exist, and the tangent space (which we carefully omitted to mention in this too short note, see \cite{GMW2024} for more details), is here the \emph{classical} tangent space of a vector space, and it can be identified with the vector space itself.  
Therefore, adapting the arguments of \cite{MR2024}, see e.g. \cite{GMW2024}, we get:
\begin{Cor}
The group $1 + Cl_h(M,E)$ is a Fr\'echet Lie group with Lie algebra $Cl_h(M,E).$ Moreover, the logarithmic equation 
\begin{equation} \label{eq:log}
g^{-1}dg = v, g(0)=1
\end{equation}
has an unique solution $g \in C^\infty(\R,1 + Cl_h(M,E))$ for a fixed path $v \in C^\infty(\R,Cl_h(M,E)).$ Finally, the map $v \mapsto g$ is smooth. 
\end{Cor}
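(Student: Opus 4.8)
The plan is to exploit the $h$-adic filtration, which makes $Cl_h(M,E)$ a Fréchet space that is positively graded in $h$: writing an element as $\sum_{k\geq 1} h^k a_k$, its valuation in $h$ is always $\geq 1$, so $1 + Cl_h(M,E)$ is precisely the group of units with leading coefficient $1$. This is structurally the same situation treated in Theorem \ref{tensor diffeological group}, with $Cl_h(M,E)$ playing the role of the valuation-positive ideal $T^{\otimes,>0}((V))$. First I would record that $Cl_h(M,E)$ is Fréchet as a countable product of the Fréchet factors $\A_k$ indexed by the powers of $h$, so that smooth paths may be integrated componentwise and the tangent space identified with the space itself, as noted above.

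For the group structure, I would observe that multiplication restricts from the (smooth) algebra multiplication on $Cl_h(M,E)$, and that inversion is given by the geometric series $(1+t)^{-1} = 1 + \sum_{k\geq 1}(-1)^k t^k$; since $t$ has $h$-valuation $\geq 1$, each $h$-order of this series is a finite sum, so inversion is smooth exactly as in the proof of Theorem \ref{tensor diffeological group}. The exponential and logarithm series are likewise well defined and mutually inverse ($h$-adic convergence again reduces each order to a finite sum), furnishing a global chart $\log : 1 + Cl_h(M,E) \to Cl_h(M,E)$ in which the operations are expressed by the Baker--Campbell--Hausdorff series. This exhibits $1 + Cl_h(M,E)$ as a Fréchet Lie group modelled on, and with Lie algebra, $(Cl_h(M,E),[\cdot,\cdot])$, the bracket being the commutator, which settles the first assertion; the remaining assertions amount to Milnor-regularity of this group.

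The heart of the statement is the logarithmic equation. I would solve $g^{-1}dg = v$, i.e. $g'(t) = g(t)v(t)$ with $g(0)=1$, order by order in $h$. Expanding $g = 1 + \sum_{n\geq 1} h^n g_n$ and $v = \sum_{n\geq 1} h^n v_n$ and collecting the coefficient of $h^n$ turns the equation into the triangular recursion
\begin{equation}
g_n'(t) = v_n(t) + \sum_{k=1}^{n-1} g_k(t)\,v_{n-k}(t), \qquad g_n(0)=0,
\end{equation}
because the $h$-valuation $\geq 1$ of both $g-1$ and $v$ forces each level to depend only on strictly lower levels. Each $g_n$ is then obtained by a single integration, $g_n(t) = \int_0^t \big(v_n + \sum_{k=1}^{n-1} g_k v_{n-k}\big)$, yielding existence and uniqueness of $g \in C^\infty(\R, 1 + Cl_h(M,E))$ by induction on $n$.

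Finally, smoothness of $v \mapsto g$ follows because a map into the product $\prod_k \A_k$ is smooth iff each component is, and each $v \mapsto g_n$ is built from finitely many operations of integration and pseudodifferential composition, all smooth. The step I expect to be the main obstacle is precisely the analytic input that composition of classical pseudodifferential operators is smooth in the relevant Fréchet topology, and that integration of smooth paths is a smooth operation; this is exactly what is imported by adapting \cite{MR2024} (see also \cite{Glo,GMW2024}). Everything else is forced by the pronilpotent, triangular structure coming from the $h$-adic grading, which collapses the a priori infinite-dimensional evolution problem into a sequence of elementary linear ODEs.
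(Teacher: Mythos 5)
Your proposal is correct and takes essentially the same route as the paper: the paper gives no detailed proof but obtains the corollary by ``adapting the arguments of \cite{MR2024}'', which is exactly the combination you spell out — the $h$-adic grading of $Cl_h(M,E)$, smoothness of the algebra operations and geometric-series inversion, and order-by-order integration of $g'=gv$ via the triangular recursion $g_n' = v_n + \sum_{k=1}^{n-1} g_k v_{n-k}$, $g_n(0)=0$, with componentwise smoothness of $v \mapsto g$ in the Fr\'echet product. Your write-up is simply a more explicit filling-in of that cited argument, including correctly isolating the only genuine analytic input (smoothness of composition of classical pseudo-differential operators and of path integration in the Fr\'echet topology).
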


\begin{rem}
Equation (\ref{eq:log}) defines the mapping $v \mapsto g$ which stands as a generalized exponential map for infinite dimensional groups. We have denoted by $\exp$ the exponential defined by formal series, but it can be either ill defined or unsufficient for technical features in differential geometry, which explains this second definition that is in general denoted by $\rm Exp$.
If $\rm Exp$ exists, the Lie group is called \emph{regular}.
\end{rem}
Let $Cl^{0,*}(M,E)$ be the Lie group of invertible pseudo-differential operators of order 0. This group is known to be a regular Lie group since Omori, but the most efficient proof is actually in \cite{Glo}, to our knowledge.
We remark a short exact sequence of Fr\'echet Lie groups:
$$ 0 \rightarrow 1 + Cl_h(M,E) \rightarrow Cl^{0,*}(M,E) + Cl_h(M,E) \rightarrow Cl^{0,*}(M,E) \rightarrow 0,$$  
which satisfies the necessary conditions \cite{KM} to state the following result:
\begin{Theorem}
$Cl^{0,*}(M,E) + Cl_h(M,E)$ is a regular Lie group with Lie algebra $Cl^{0}(M,E) + Cl_h(M,E).$
\end{Theorem}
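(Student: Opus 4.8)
The plan is to deduce the regularity of $G := Cl^{0,*}(M,E) + Cl_h(M,E)$ from the short exact sequence of Fréchet Lie groups
$$ 0 \rightarrow 1 + Cl_h(M,E) \rightarrow G \rightarrow Cl^{0,*}(M,E) \rightarrow 0 $$
together with the regularity of its two ends. First I would observe that the quotient map $\pi : G \rightarrow Cl^{0,*}(M,E)$ reading off the $h^0$-component admits the inclusion $\sigma : g \mapsto g + 0$ as a global smooth section, and that $\sigma$ is a group homomorphism; hence the sequence splits and $G$ is the semidirect product $N \rtimes Q$, with $N = 1 + Cl_h(M,E)$, $Q = Cl^{0,*}(M,E)$, and action given by conjugation $(g, 1+t) \mapsto 1 + g\,t\,g^{-1}$. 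This action is smooth because conjugation by an invertible order-$0$ operator preserves the $h$-degree and carries $Cl_h(M,E)$ smoothly into itself. By the Corollary above, $N$ is regular with Lie algebra $Cl_h(M,E)$; by \cite{Glo}, $Q$ is regular with Lie algebra $Cl^0(M,E)$; and the Lie algebra of $G$ is the direct sum $\mathfrak g = Cl^0(M,E) + Cl_h(M,E)$, as claimed, with $\mathfrak n = Cl_h(M,E)$ and $\mathfrak q = Cl^0(M,E)$.

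I would then apply the extension principle for regular Lie groups in the convenient setting of \cite{KM}: an extension of a regular Lie group by a regular Lie group whose projection admits smooth local sections is again regular. Concretely, for a smooth path $v \in C^\infty(\R, \mathfrak g)$ I would solve $g^{-1}dg = v$, $g(0) = 1$, as follows. Write $v = v_{\mathfrak n} + v_{\mathfrak q}$ along the splitting $\mathfrak g = \mathfrak n \oplus \mathfrak q$. First solve $\bar g^{-1} d\bar g = v_{\mathfrak q}$, $\bar g(0) = 1$ in $Q$ by its regularity, and lift to $g_0 = \sigma(\bar g) \in C^\infty(\R, G)$; since $\sigma$ is a homomorphism one has $g_0^{-1}dg_0 = v_{\mathfrak q}$. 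Next set $w = \mathrm{Ad}(g_0)\,v_{\mathfrak n}$, which is a smooth path in $\mathfrak n$ because $\mathrm{Ad}(g_0)$ preserves $\mathfrak n$, and solve $n^{-1}dn = w$, $n(0)=1$ in $N$ by its regularity. Then $g := n\,g_0$ satisfies $g^{-1}dg = \mathrm{Ad}(g_0^{-1})(n^{-1}dn) + g_0^{-1}dg_0 = v_{\mathfrak n} + v_{\mathfrak q} = v$ and $g(0)=1$, while uniqueness follows from uniqueness of $\bar g$ in $Q$ and of $n$ in $N$.

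Finally I would check that $v \mapsto g$ is smooth, since it is the composite of the smooth evolution map of $Q$, the smooth section $\sigma$, the smooth construction of $w$ from $(g_0, v)$, the smooth evolution map of $N$, and the smooth multiplication $(n, g_0) \mapsto n g_0$ of $G$; this is exactly the regularity of $G$. The main obstacle is the second step: one must verify that $w = \mathrm{Ad}(g_0)\,v_{\mathfrak n}$ genuinely takes values in $\mathfrak n$ and depends smoothly on $v$ as a curve. Both reduce to the smoothness of the adjoint action of $Q$ on $\mathfrak n$ established in the first paragraph, promoted to a smooth map on path spaces through the cartesian closedness of the convenient/diffeological framework; this is what allows the three solving maps to be composed as smooth maps between spaces of smooth curves.
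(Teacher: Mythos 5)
Your proof is correct and takes essentially the same route as the paper: the paper simply remarks that the short exact sequence $0 \rightarrow 1 + Cl_h(M,E) \rightarrow Cl^{0,*}(M,E) + Cl_h(M,E) \rightarrow Cl^{0,*}(M,E) \rightarrow 0$ satisfies the hypotheses of the extension principle for regular Lie groups in \cite{KM}, with the kernel regular by the preceding Corollary and the quotient regular by \cite{Glo}. Your semidirect-product splitting and the explicit solution of $g^{-1}dg = v$ by solving first in the quotient, then in the kernel with the $\mathrm{Ad}$-twisted right-hand side, is precisely the proof of that cited extension theorem, so you have unpacked the citation rather than found a different argument.
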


\subsection{Series over cobordisms} \label{ss:cob} 
We describe here a setting where the indexes which live an a small category. 
This example recovers, passing to homotopy classes, the cobordism setting. 
We do not wish to consider homotopy invariant properties, and describe 
some kind of ``pseudo-cobordism''.
We now consider the set
$ Gr = \coprod_{m \in \N^*} Gr_m$
where $Gr_m$ is the set of m-dimensional compact connected oriented manifolds $M$, possibly with boundary, where the boundaries $\partial M$ are separated into two disconnected parts: the initial part $\alpha(M)$ and the final part $\beta(M).$
Then, we have a composition law $*$, called cobordism composition in the rest of the text, defined by the following relation:

\begin{Definition}
	Let $m \in \N^*.$ Let $M, M' \in Gr_m.$ Then $M'' = M*M' \in Gr_m$ exists if 
	\begin{enumerate}
		\item $\alpha(M) = \beta(M') \neq \emptyset,$ up to diffeomorphism
		\item $\alpha(M'')= \alpha(M')$
		and  $\beta(M'') = \beta(M)$
		\item $M''$ cuts into two pieces $M'' = M \cup M'$ with $M \cap M' = \alpha(M) = \beta(M').$
	\end{enumerate}
\end{Definition} 
\noindent
This  \textbf{cobordism composition} extends to embedded manifolds:
\begin{Definition}
	Let $N$ be a smooth (finite dimensinal) manifold.
	$$ Gr(N) = \coprod_{m \in \N^*} \coprod_{M \in Gr_m} Emb(M,N).$$
	where the notation $Emb(M,N)$ denotes the smooth manifold of smooth embeddings of $M$ into $N.$
\end{Definition}
\noindent
Notice that since $dim(N) < \infty,$ we have $m \leq dim(N).$
We recall that that $Gr(N)$ is a smooth manifold, since $Emb(M,N)$ is a smooth manifold \cite{KM}, and that $*$ is smooth because it is smooth for the underlying diffeologies. 

\begin{Definition}
	\begin{itemize}
		\item Let $ I =  \left( Gr \times \N^*\right) \coprod (\emptyset, 0),$ graded by the second component.  Assuming $\emptyset$ as a neutral element for $*$, we extend the cobordism composition into a composition, also noted $*$, defined as:
		$$ (M,p) * (M',p') = (M*M', p+ p')$$ when $M*M'$ is defined. We call \textbf{length} of $(M,p)$ the number $p.$
		\item
		Let $ I(N) =  \left( Gr(N) \times \N^*\right) \coprod (\emptyset, 0),$ graded by the second component.  Assuming $\emptyset$ as a neutral element for $*$, we extend the cobordism composition into a composition, also noted $*$, defined as
		$ (M,p) * (M',p') = (M*M', p+ p')$ when $M*M'$ is defined.
		\item Let $m \in \N^* .$ We note by $I_m$ and $I_m(N)$ the set of indexes based on $Gr_m$ and on $Gr_m(N)$ respectively
	\end{itemize}
\end{Definition}
\noindent
Let us now turn to $q-$deformed groups and algebras. For these definitions, the length number carries the natural grading for series.
Let $A$ be a Fr\'echet algebra. Let $m \in \N^*.$ Let 
$ \A_{I_m} = \left\{ \sum_{(M,n) \in I_m} q^na_{M,n} | a_{M;n} \in A \right\}$ 
{ and let } 
$\A_{I_m}(N) = \left\{ \sum_{(\phi,n) \in I_m(N)} q^na_{\phi,n} | a_{M;n} \in A \right\}. $
\begin{Theorem} \label{cob}
	Let $\Gamma \subset \coprod_{m \in \N^*} I_m$, resp. $\Gamma(N) \subset \coprod_{m \in \N^*} I_m(N)$, be a family of indexes, stable under $*,$ such that $\forall m \in \N^*,$
	\begin{enumerate}
		\item  $\forall m \in \N^*,$ $\Gamma \cap I_m$ is finite or, more generally;
		\item $\forall \gamma \in \Gamma,$ the set of pairs $(\gamma',\gamma'') \in \Gamma^2$ such that $\gamma = \gamma' *`\gamma''$ is finite.
	\end{enumerate}
	Then
	$ 1_A + \A_{\Gamma-\{(\emptyset,0)\}}$
	is a Fr\'echet Lie group with Lie algebra $A_\Gamma.$
	Moreover, for each regular diffeological Lie group $G $ with Lie algebra $\mathfrak{g}$ such that $G \subset A^*$ smoothly, $ G \oplus \A_{\Gamma - \{(\emptyset, 0)\}}$
	is a regular diffeological Lie group with Lie algebra $\mathfrak{g} \oplus \mathcal{A}_{\Gamma-\{(\emptyset,0)\}}.$
	Moreover, the results are the same replacing $\Gamma$ by $\Gamma(N).$
\end{Theorem}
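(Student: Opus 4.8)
The plan is to reduce everything to two ingredients already established in the note: the degree-by-degree (here length-by-length) inversion formula from the proof of Theorem~\ref{tensor diffeological group}, and the solvability of the logarithmic equation \eqref{eq:log} proved for $Cl_h(M,E)$. First I would verify that $\A_\Gamma$ is a Fr\'echet algebra. The finiteness hypotheses are used exactly here: condition (2), which (1) implies, guarantees that for every $\gamma \in \Gamma$ the length-$\gamma$ component of a product $ab$, namely $\sum_{\gamma'*\gamma''=\gamma} a_{\gamma'}b_{\gamma''}$, is a \emph{finite} sum in $A$, so multiplication is well defined; since $A$ is Fr\'echet and the grading by length organizes $\Gamma-\{(\emptyset,0)\}$ as a countable family, the underlying space is a product of copies of $A$ with the product (hence Fr\'echet) topology, and smoothness of the graded components of the product as maps into $A$ makes the multiplication smooth.

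Second, I would show that $1_A + \A_{\Gamma-\{(\emptyset,0)\}}$ is a group and identify it as a Fr\'echet Lie group. The subspace $\A_{\Gamma-\{(\emptyset,0)\}}$ is a two-sided ideal of positive valuation, and products strictly raise the valuation, so for $t \in \A_{\Gamma-\{(\emptyset,0)\}}$ the geometric series $(1_A+t)^{-1} = \sum_{k\ge 0}(-1)^k t^k$ has each length component equal to a finite sum; this is the verbatim analogue of the inversion argument in the proof of Theorem~\ref{tensor diffeological group}, and it shows inversion is a smooth self-map. The formal exponential and logarithm series then provide mutually inverse smooth bijections between the ideal $\A_{\Gamma-\{(\emptyset,0)\}}$ and $1_A+\A_{\Gamma-\{(\emptyset,0)\}}$, giving a global chart; hence the latter is a Fr\'echet Lie group modelled on, and with Lie algebra, $\A_{\Gamma-\{(\emptyset,0)\}}$.

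Third comes regularity, which I expect to be the main obstacle. Adapting the argument behind the Corollary of Section~\ref{opd} (itself following \cite{Glo,MR2024}), I would solve $g^{-1}dg = v$, $g(0)=1_A$, length by length: the equation for the length-$n$ component of $g$ involves, besides $v$, only components of $g$ of strictly smaller length, so at each step it is an affine linear ODE in the Fr\'echet algebra $A$ whose solution exists, is unique, and depends smoothly on the data by ordinary integration; the product Fr\'echet topology assembles these into a solution $g \in C^\infty(\R, 1_A+\A_{\Gamma-\{(\emptyset,0)\}})$ with $v \mapsto g$ smooth. The delicate points are that the triangular, length-graded structure really decouples the system and that smoothness survives in the projective limit, both of which are guaranteed by the finiteness conditions on $\Gamma$.

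Finally, for the split extension I would realize $G \oplus \A_{\Gamma-\{(\emptyset,0)\}}$ through the short exact sequence
\[
0 \to 1_A+\A_{\Gamma-\{(\emptyset,0)\}} \to G \oplus \A_{\Gamma-\{(\emptyset,0)\}} \to G \to 0,
\]
exactly as in the $Cl^{0,*}(M,E)+Cl_h(M,E)$ example. The inclusion $G \hookrightarrow G\oplus\A_{\Gamma-\{(\emptyset,0)\}}$ is a smooth global section, the kernel is regular by the previous step and $G$ is regular by hypothesis, and the $G$-action on the ideal is smooth because $G \subset A^*$ smoothly and $A$-multiplication is smooth; the extension criterion of \cite{KM} then yields that $G \oplus \A_{\Gamma-\{(\emptyset,0)\}}$ is a regular diffeological Lie group with Lie algebra $\mathfrak{g}\oplus\A_{\Gamma-\{(\emptyset,0)\}}$. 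The embedded case $\Gamma(N)$ requires no new idea: replacing $Gr$ by $Gr(N)$ only changes the indexing set, while the length grading, the finiteness conditions, and every estimate above are untouched, so the same proof applies word for word.
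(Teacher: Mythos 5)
Your proposal is correct and takes essentially the approach the paper intends: the paper's own ``proof'' is the single remark that the theorem ``gathers all the techniques carefully described in the examples before this section'' and is left as an exercise, and your four steps --- componentwise finiteness making $\A_\Gamma$ an algebra, geometric-series inversion as in the proof of Theorem \ref{tensor diffeological group}, length-by-length integration of the logarithmic equation (\ref{eq:log}) as in the corollary of Section \ref{opd}, and the extension criterion of \cite{KM} as in the $Cl^{0,*}(M,E)+Cl_h(M,E)$ example --- are exactly the techniques being referenced. The one point to watch is your assertion that the length grading makes $\Gamma-\{(\emptyset,0)\}$ countable (needed for the product topology to be Fr\'echet): this is automatic under hypothesis (1), and for $\Gamma$ it holds because there are only countably many diffeomorphism classes of compact manifolds, but under hypothesis (2) alone the set $\Gamma(N)$ may be uncountable (e.g.\ an uncountable family of embeddings of a single non-self-composable cobordism), a caveat that the theorem statement itself, not just your proof, glosses over.
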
 	

The proof gathers all the techniques carefully described in the examples before this section. Therefore it can be left to the reader as an exercise.

\section{Perspectives}

The rather abstract constructions of section \ref{ss:cob} may serve as an indicative road sign to possible generalizations for applications, restricting the technical problems in order to serve as a pedagogical example. Indeed, more general refinements can be provided, by means of subtantial efforts in the control of the theoretical problems raised by the examples that one intends to consider. Such examples include the construction of formal series of measures in the framework of stochastic cosurfaces as developed in \cite{Ma2022}, by constructing explicitely such series. However, the necessarily diffeological framework for dealing safely with measures and probabilities actually has to be clearly precised, since many non equivalent options (diffeological or not) remain possible to deal with geometry on such objects. Moreover, the notion of regular Lie group in the diffeological category carries many particular problems that we did not have the place to develop here. An overview of selected technical particularities can be found in \cite{GMW2024,Les}.   
Within the framekork of probabilities, one can potentially find  such constructions in branching processes that have applications e.g. to morphogenesis problems, see e.g. \cite{BreI,BreII}, and more generaly any problem related to the asymptotic behavour of discrete stochastic processes. 
In a more deterministic perspective, we have to mention the ``old'' sewing and gluing formulas in conformal field theories refreshed in \cite{Mar2020}, as well as the Connes-Kreimer Hopf algebra built upon the binary rooted planar trees also present in Runge-Kutta methods \cite{BS2014,CK1998,HLW2002,murua2021}. One can also find an appeal to diffeological generalizations of (yet, already generalized) series in the outlook section of \cite{KT2018}, working along the lines of applications of (generalized) Taylor series in computer science, see e.g. \cite{ER2008,EW2025} for more classical approaches.

\end{document}